\documentclass[11pt]{amsart}
\usepackage[utf8]{inputenc}
\usepackage[T1]{fontenc}
\usepackage{amsmath,amssymb,amsthm,mathtools}
\usepackage[authoryear,round]{natbib}
\usepackage{graphicx,booktabs,xcolor}
\usepackage[margin=2.8cm]{geometry}
\usepackage[colorlinks=true,linkcolor=blue!50!black,citecolor=blue!50!black,urlcolor=blue!50!black]{hyperref}

\newtheorem{theorem}{Theorem}[section]
\newtheorem{proposition}[theorem]{Proposition}
\newtheorem{lemma}[theorem]{Lemma}
\newtheorem{corollary}[theorem]{Corollary}
\theoremstyle{definition}
\newtheorem{definition}[theorem]{Definition}
\newtheorem{example}[theorem]{Example}
\theoremstyle{remark}
\newtheorem{remark}[theorem]{Remark}

\newcommand{\C}{\mathcal{C}}
\newcommand{\CSI}{\mathcal{C}_{\mathrm{SI}}}
\newcommand{\K}{\mathcal{K}}
\newcommand{\Proj}{\mathcal{P}}
\newcommand{\I}{\mathcal{I}}
\newcommand{\R}{\mathbb{R}}
\newcommand{\E}{\mathbb{E}}
\newcommand{\Prob}{\mathbb{P}}
\newcommand{\1}{\mathbf{1}}
\newcommand{\xiu}{\xi^{\uparrow}}
\newcommand{\xid}{\xi^{\downarrow}}
\newcommand{\omu}{\omega^{\uparrow}}
\newcommand{\Var}{\operatorname{Var}}
\newcommand{\Cov}{\operatorname{Cov}}
\newcommand{\lcm}[1]{\widehat{#1}}
\newcommand{\ip}[2]{\langle #1,#2\rangle}

\begin{document}

\title{The monotone part of Chatterjee's rank correlation}

\author{Karl Friedrich Siburg}
\address{Fakult\"at f\"ur Mathematik, Technische Universit\"at Dortmund, 44227 Dortmund, Germany}
\email{karlfriedrich.siburg@tu-dortmund.de}

\date{}

\begin{abstract}
Chatterjee's rank correlation $\xi$ measures how strongly a random variable $Y$ depends functionally on a random variable $X$, but it does not distinguish increasing, decreasing and non-monotone relationships. We ask how much of the dependence measured by $\xi$ is explained by an increasing relationship. For continuous marginals, $\xi$ is, up to a factor, a squared distance from independence in a Hilbert space geometry derived from the Sobolev scalar product for copulas. We answer it by the metric projection onto the closed convex set of stochastically increasing copulas in this geometry. The projection is explicit: it replaces every vertical section by its least concave majorant, and it coincides with the smallest stochastically increasing copula dominating the given one. It splits $\xi$ orthogonally into a monotone part $\xiu$ and the squared distance to the class of stochastically increasing copulas; dually, $\xiu$ is, up to a factor, the squared distance to the polar cone, which corresponds to negative quadrant dependence. The monotone part vanishes exactly for negatively quadrant dependent copulas, equals $\xi$ exactly for stochastically increasing copulas, and equals one only for the comonotonicity copula. In contrast to $\xi$, it is increasing in the concordance order and continuous with respect to weak convergence. Symmetrisation decomposes the measure of mutual complete dependence in the same way. Finally, the construction extends to arbitrary distributions via isotonic regression of conditional distribution functions. In particular, the plug-in estimator based on the empirical distribution is strongly consistent for every copula, without smoothing or tuning parameters.
\end{abstract}

\keywords{Copula; Chatterjee's rank correlation; stochastically increasing; positive regression dependence; least concave majorant; isotonic regression; Sobolev scalar product; negative quadrant dependence; plug-in estimation}
\subjclass[2020]{Primary 62H20; Secondary 62H05, 62G05, 60E15, 46C05}

\maketitle

\section{Introduction}\label{sec:intro}

For a random variable $X$ and a non-degenerate random variable $Y$ with distribution $\mu$, Chatterjee's rank correlation is
\begin{equation}\label{eq:xi-general}
\xi(Y|X)=\frac{\int\Var\bigl(\Prob(Y\ge t\mid X)\bigr)\,d\mu(t)}{\int\Var\bigl(\1\{Y\ge t\}\bigr)\,d\mu(t)}
\end{equation}
\citep{Chatterjee2021,AzadkiaChatterjee2021}. It takes values in $[0,1]$, it vanishes if and only if $X$ and $Y$ are independent, and it equals one if and only if $Y=f(X)$ almost surely for some measurable function $f$. For continuous marginals, $\xi$ depends only on the copula $C$ of $(X,Y)$ and coincides with the measure of regression dependence of \citet{DetteSiburgStoimenov2013},
\begin{equation}\label{eq:xi-copula}
\xi(C)=6\int_0^1\!\!\int_0^1\bigl(\partial_1C(u,v)\bigr)^2\,du\,dv-2 .
\end{equation}

\emph{The question.} By design, $\xi$ does not distinguish between increasing, decreasing and non-monotone functional relationships; for instance, $\xi(M)=\xi(W)=1$ for the Fr\'echet--Hoeffding bounds $M$ and $W$. This paper addresses the question
\begin{center}
\emph{How much of the dependence measured by $\xi$ is explained by an increasing relationship?}
\end{center}
Equivalently, how much of the variation of $\1\{Y\ge t\}$ that $X$ explains can already be explained by non-decreasing functions of $X$? Concordance measures such as Spearman's $\rho_S$ do not answer it: they are not related to $\xi$ by any decomposition, and they balance positive against negative association.

\emph{Two further features of $\xi$.} As a consequence of its design, $\xi$ is not monotone with respect to the concordance (pointwise) order of copulas. Moreover, $\xi$ is not continuous with respect to weak convergence \citep{BucherDette2025}, although it is continuous with respect to Markov products \citep{AnsariFuchs2025cont}. None of this is a shortcoming, since $\xi$ is designed to detect any kind of functional dependence. Both properties, however, hold for the monotone part introduced below.

\emph{Geometric interpretation.} \citet{SiburgStoimenov2008} introduced the Sobolev scalar product $\ip{A}{B}=\int\nabla A\cdot\nabla B\,d\lambda$ on the linear span of bivariate copulas. It splits into the two partial scalar products $\ip{A}{B}_i=\int\partial_iA\,\partial_iB\,d\lambda$, $i=1,2$, with associated distances $d_1,d_2$. As recalled in Section~\ref{sec:prelim},
\begin{equation}\label{eq:geom}
\xi(C)=6\,d_1(C,\Pi)^2,\qquad
\omega(C)^2:=3\,d(C,\Pi)^2=\tfrac12\bigl(\xi(C)+\xi(C^\top)\bigr),
\end{equation}
where $\Pi$ is the independence copula, $C^\top$ the transposed copula and $\omega$ the measure of mutual complete dependence of \citet{SiburgStoimenov2010}. Thus $\xi$ is a squared distance in a Hilbert space geometry, and its symmetrisation is the squared Sobolev distance.

\emph{Approach and main results.} We address this question geometrically. The stochastically increasing (SI) copulas, i.e., the copulas of positively regression dependent random vectors in the sense of \citet{Lehmann1966}, form a closed convex set $\CSI$. We project onto $\CSI$ with respect to the distance $d_1$ in which $\xi$ is a squared distance. Our main results are as follows.
\begin{enumerate}
\item The metric projection $\Proj C$ exists and is explicit: each vertical section $C(\cdot,v)$ is replaced by its least concave majorant. Equivalently, $\partial_1\Proj C(\cdot,v)$ is the isotonic regression of $\partial_1C(\cdot,v)$ onto non-increasing functions. Moreover, $\Proj C$ is the smallest SI copula dominating $C$ pointwise (Theorem~\ref{thm:proj}).
\item In the tangent space at $\Pi$, the directions of stochastic increasingness and those of negative quadrant dependence form mutually polar cones, $\mathfrak K$ and $\mathfrak K^\circ$. Accordingly, $C-\Pi=(\Proj C-\Pi)+(C-\Proj C)$ is the Moreau decomposition of $C-\Pi$ into its projections onto $\mathfrak K$ and $\mathfrak K^\circ$, and the two parts are orthogonal (Theorem~\ref{thm:polar}).
\item The \emph{monotone part} $\xiu(C):=\xi(\Proj C)$ satisfies the exact decomposition $\xi(C)=\xiu(C)+6\,d_1(C,\CSI)^2$. It vanishes if and only if $C$ is negatively quadrant dependent, it equals $\xi(C)$ if and only if $C$ is SI, and it equals one if and only if $C=M$. It is increasing in the concordance order, Lipschitz continuous with respect to $d_1$ after taking square roots, and continuous with respect to uniform (equivalently, weak) convergence of copulas (Theorem~\ref{thm:xiu}). By the polarity in (2), $\xiu(C)/6$ and $(\xi(C)-\xiu(C))/6$ are the squared distances of $C-\Pi$ to the cones $\mathfrak K^\circ$ and $\mathfrak K$, respectively; this explains why NQD and SI are exactly the cases $\xiu=0$ and $\xiu=\xi$.
\item The symmetrisation $\omu$ decomposes the measure of mutual complete dependence, $\omega^2=(\omu)^2+3\,[\,d_1(C,\CSI)^2+d_2(C,\CSI^\top)^2\,]$, and inherits all properties above (Theorem~\ref{thm:sym}).
\item The construction extends to arbitrary distributions, including discrete and mixed ones, by projecting the conditional distribution functions in~\eqref{eq:xi-general} onto non-decreasing functions of $X$. All characterisations remain valid (Theorem~\ref{thm:general}). In particular, the plug-in estimator $\xiu(\Prob_n)$ based on the empirical distribution is strongly consistent for every copula, without smoothing or tuning parameters, whereas the plug-in principle fails for $\xi$ (Theorem~\ref{thm:est}).
\end{enumerate}
Combining $\xiu$ with its decreasing counterpart $\xid$ yields $\max\{\xiu,\xid\}$, a weakly continuous measure that vanishes exactly under independence and equals one exactly under perfect monotone dependence (Corollary~\ref{cor:max}).

\emph{What is new.} Existing measures of monotone dependence are of concordance type. Spearman's $\rho_S$, Kendall's $\tau$ and the asymmetric grade correlation of \citet{WalzEberlGneiting2025}, which reduces to $\rho_S$ for continuous marginals, attain their maximal value exactly under increasing functional dependence. However, they balance positive against negative association, so that their zero set neither characterises independence nor negative quadrant dependence, and they carry no information about the strength of non-monotone functional dependence. Measures of functional dependence such as $\xi$, the measure $\zeta_1$ of \citet{Trutschnig2011} or the rearranged dependence measures of \citet{StrothmannDetteSiburg2024} detect any functional relationship, but they assign the maximal value to every functional relationship, whether increasing, decreasing or non-monotone. The monotone part $\xiu$ is not meant as yet another coefficient of increasing functional dependence; for that purpose $\rho_S$ and $\tau$ suffice. Rather, it answers the question raised above, which neither type of measure addresses. Accordingly, $\xiu$ is an exact component of $\xi$ itself, it vanishes exactly under negative quadrant dependence, it equals $\xi$ exactly under stochastic increasingness, and it attains one exactly under increasing functional dependence. It is linked to concordance by the sharp bound $\xiu\ge\frac12(\rho_S)_+^2$ (Theorem~\ref{thm:xiu}(g)), so positive concordance forces a positive monotone part. The converse fails: the mixture $\frac12M+\frac12W$ and the copula of $(U,|2U-1|)$ have $\rho_S=0$, but $\xiu\approx0.079$ and $\xiu\approx0.318$, respectively (Table~\ref{tab:cmp}). Behind the decomposition lies the structural fact (2), which appears to be new: in the geometry of $\xi$, stochastic increasingness and negative quadrant dependence are dual to each other in the sense of convex cones. To the best of our knowledge, neither the projection onto $\CSI$ in the geometry of $\xi$ nor a decomposition of $\xi$ into an increasing part and a remainder has been considered before; recent work relates $\xi$ to positive dependence and to concordance through inequalities and exact regions \citep{AnsariRockel2026,Rockel2026,OrendayLaresRockel2026}, not through a decomposition.

\emph{Statistical relevance.} The properties of $\xiu$ matter for statistical practice in four ways. First, the identity $1=\xiu+(\xi-\xiu)+(1-\xi)$ is an orthogonal decomposition of the variation of the indicators $\1\{Y\ge t\}$ into a part explained by non-decreasing functions of $X$, a part explained by $X$ but not by non-decreasing functions of $X$, and an unexplained part (Remark~\ref{rem:anova}). Thus $\xiu$ is an isotonic counterpart of a coefficient of determination. The middle part contains decreasing as well as genuinely non-monotone dependence; it splits further as $\xi-\xiu=\xid+\xi^\circ$ with $\xi^\circ\ge0$ (Proposition~\ref{prop:three}), so that $\xiu$, $\xid$ and $\xi^\circ$ reveal whether a strong functional dependence is increasing, decreasing or neither, which $\xi$ alone cannot do (Example~\ref{ex:tent}). Second, the residual $\xi-\xiu$ is an $L^2$-type measure of departure from stochastic increasingness, a hypothesis that is routinely tested in econometrics \citep{LeeLintonWhang2009,DelgadoEscanciano2012,Seo2018}; Theorem~\ref{thm:proj} shows that the least concave majorant used in these tests is precisely the metric projection in the geometry of $\xi$. Third, weak continuity makes $\xiu$ stable under small perturbations of the distribution and turns the plug-in principle into a consistent estimation method without smoothing or tuning parameters (Theorem~\ref{thm:est}), whereas the plug-in of the empirical distribution always yields $\xi=1$. Fourth, monotonicity in the concordance order makes $\xiu$ compatible with the dependence orderings used to compare dependence structures, for instance in risk management, which $\xi$ is not.

\emph{Related work.} The rearranged dependence measures of \citet{StrothmannDetteSiburg2024} also map a copula to an SI copula, namely by monotone rearrangement of $\partial_1C(\cdot,v)$. Rearrangement preserves $\xi$ and turns any functional dependence into a monotone one, whereas the projection retains only the part of $\xi$ that is explained by non-decreasing functions of $X$; the two constructions are compared in Remark~\ref{rem:rearr}. The identity $\xi=6\,d_1(\cdot,\Pi)^2$ links $\xi$ to the strong metrics on copulas introduced by \citet{Trutschnig2011}; see Section~\ref{sec:prelim}. Finally, Chatterjee's rank correlation and its relation to positive dependence concepts and to concordance measures have attracted considerable interest; see \citet{AnsariRockel2024,AnsariRockel2026,AnsariFuchs2025order,Rockel2026} and the references therein.

The paper is organised as follows. Section~\ref{sec:prelim} collects preliminaries. Section~\ref{sec:proj} studies the projection, Section~\ref{sec:xiu} the monotone part $\xiu$, and Section~\ref{sec:examples} presents examples. Section~\ref{sec:sym} treats the symmetric version, and Section~\ref{sec:general} arbitrary distributions and estimation.

\section{Preliminaries}\label{sec:prelim}

\subsection{Copulas and positive dependence}
Let $\C$ denote the set of bivariate copulas, $\Pi(u,v)=uv$, $M(u,v)=\min(u,v)$ and $W(u,v)=\max(u+v-1,0)$. For $C\in\C$ let $C^\top(u,v)=C(v,u)$. We write $C_1\le C_2$ if $C_1(u,v)\le C_2(u,v)$ for all $(u,v)$ (concordance or pointwise order). Every copula is $1$-Lipschitz in each argument, so for every $v\in[0,1]$ the section $C(\cdot,v)$ is differentiable almost everywhere with $0\le\partial_1C(\cdot,v)\le1$ and
\[
C(u,v)=\int_0^u\partial_1C(s,v)\,ds .
\]
If $(U,V)$ has distribution function $C$, then $\partial_1C(u,v)=\Prob(V\le v\mid U=u)$ for almost every $u$ \citep{DarsowNguyenOlsen1992,Nelsen2006}.

A copula $C$ is \emph{negatively quadrant dependent} (NQD) if $C\le\Pi$ and \emph{positively quadrant dependent} (PQD) if $C\ge\Pi$. It is \emph{stochastically increasing} (SI) if $V$ is stochastically increasing in $U$, i.e., if $\Prob(V\le v\mid U=u)$ is non-increasing in $u$ for every $v$; this is Lehmann's positive regression dependence. Equivalently, $C(\cdot,v)$ is concave for every $v\in[0,1]$ \citep[Chapter~5]{Nelsen2006}. We denote by $\CSI$ the set of SI copulas. It is convex and closed under pointwise convergence. Every SI copula is PQD, since a concave function on $[0,1]$ lies above its chord.

\subsection{The Sobolev scalar product and Chatterjee's rank correlation}
For $A,B$ in the linear span of $\C$ let
\[
\ip{A}{B}_i=\int_0^1\!\!\int_0^1\partial_iA\,\partial_iB\,d\lambda,\quad i=1,2,\qquad \ip{A}{B}=\ip{A}{B}_1+\ip{A}{B}_2 ,
\]
and let $d_i$, $d$ be the corresponding distances, so that $d^2=d_1^2+d_2^2$ and $d_2(A,B)=d_1(A^\top,B^\top)$. Here $\ip{\cdot}{\cdot}$ is the Sobolev scalar product of \citet{SiburgStoimenov2008}. Since every $A\in\C$ satisfies $A(0,v)=0$, $d_1$ is a metric on $\C$. Since $\partial_1A(u,v)=K_A(u,[0,v])$ for the Markov kernel $K_A$ of $A$, the metric $d_1$ coincides with the metric $D_2$ of \citet{Trutschnig2011}; in particular, it generates the same topology on $\C$ as the metrics $D_1$ and $D_\infty$ studied there.

For every $C\in\C$ we have $\ip{C}{\Pi}_1=\int_0^1\!\int_0^1 v\,\partial_1C(u,v)\,du\,dv=\int_0^1v^2\,dv=\tfrac13$ and $\ip{\Pi}{\Pi}_1=\tfrac13$, hence
\[
d_1(C,\Pi)^2=\int\!\!\int(\partial_1C)^2\,d\lambda-\tfrac13=\tfrac16\,\xi(C)
\]
by \eqref{eq:xi-copula}; in terms of Trutschnig's metric, this identity $\xi(C)=6\,D_2(C,\Pi)^2$ has also been noted by \citet{KasperFuchsTrutschnig2021}. In the same way $d_2(C,\Pi)^2=\tfrac16\,\xi(C^\top)$. Together with $\ip{C}{\Pi}=\tfrac23$ \citep[Theorem~12]{SiburgStoimenov2008} this gives \eqref{eq:geom}, where $\omega(C)=(3\|C\|^2-2)^{1/2}$ is the measure of mutual complete dependence of \citet{SiburgStoimenov2010}. Recall that $\xi(C)=0$ if and only if $C=\Pi$, and $\xi(C)=1$ if and only if $\partial_1C\in\{0,1\}$ almost everywhere, i.e., if and only if $V=f(U)$ almost surely for a measurable $f$ \citep{DetteSiburgStoimenov2013}. Analogously, $\omega(C)=1$ if and only if $C$ is invertible with respect to the Markov product \citep{SiburgStoimenov2008}.

\section{The projection onto stochastically increasing copulas}\label{sec:proj}

\subsection{Isotonic regression on the unit interval}
Let $\K\subset L^2(0,1)$ be the set of (equivalence classes of) non-increasing functions. It is a closed convex cone containing all constants; closedness follows because an $L^2$-convergent sequence has an almost everywhere convergent subsequence. Let $\I$ denote the metric projection onto $\K$ in $L^2(0,1)$, i.e., the isotonic regression onto non-increasing functions. For a function $G$ on $[0,1]$, $\lcm G$ denotes its least concave majorant.

\begin{lemma}\label{lem:iso}
Let $g\in L^\infty(0,1)$ and $G(u)=\int_0^ug(s)\,ds$.
\begin{enumerate}
\item[(i)] $\lcm G$ is Lipschitz continuous with $\lcm G(0)=0$ and $\lcm G(1)=G(1)$, and its derivative satisfies $(\lcm G)'=\I g$ almost everywhere. In particular, $\lcm G(u)=\int_0^u\I g(s)\,ds$.
\item[(ii)] $\ip{g-\I g}{\I g}_{L^2}=0$ and $\int_0^1(g-\I g)=0$. Consequently, for every constant $c\in\R$,
\[
\|g-c\|_{L^2}^2=\|\I g-c\|_{L^2}^2+\|g-\I g\|_{L^2}^2 .
\]
\item[(iii)] If $g_1\le g_2$ almost everywhere, then $\I g_1\le\I g_2$ almost everywhere. In particular, $\operatorname{ess\,inf}g\le\I g\le\operatorname{ess\,sup}g$.
\item[(iv)] $\|\I g_1-\I g_2\|_{L^2}\le\|g_1-g_2\|_{L^2}$ and $\sup|\lcm G_1-\lcm G_2|\le\sup|G_1-G_2|$.
\end{enumerate}
\end{lemma}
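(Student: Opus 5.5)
The plan is to reduce everything to two standard facts about projections onto a closed convex cone in a Hilbert space, and to the classical link between the least concave majorant and isotonic regression. Since $\K$ is a closed convex cone containing the constants, the metric projection $\I g$ exists and is unique. It is characterised by the variational inequalities
\[
\ip{g-\I g}{h-\I g}_{L^2}\le 0 \quad\text{for all } h\in\K .
\]

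For (ii), we test this inequality with $h=2\I g$ and $h=0$, both of which lie in $\K$ because it is a cone. This gives $\ip{g-\I g}{\I g}=0$. Testing with $h=\I g\pm 1$, which lie in $\K$ because it contains the constants, gives $\int(g-\I g)=0$. The Pythagorean identity then follows by expanding
\[
\|g-c\|^2=\|(\I g-c)+(g-\I g)\|^2 .
\]
The cross term is $\ip{g-\I g}{\I g}-c\int(g-\I g)=0$.

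For (iv), the first inequality is nonexpansiveness of a metric projection onto a closed convex set. For (iii), we would use the max–min (or min–max) formula for the antitonic regression on $[0,1]$:
\[
\I g(u)=\inf_{a<u}\ \sup_{b>u}\ \frac{1}{b-a}\int_a^b g ,
\]
valid at a.e.\ $u$, or the corresponding minmax with the roles of $\sup$ and $\inf$ exchanged. This formula is clearly monotone in $g$, and the bounds $\operatorname{ess\,inf}g\le\I g\le\operatorname{ess\,sup}g$ follow by comparing with constant functions, which are their own projections.

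Rather than quoting this formula, it is cleaner to derive (iii) from (i). If $g_1\le g_2$, then $G_2-G_1$ is non-decreasing. Any concave majorant $H$ of $G_2$ yields, by subtracting, a majorant of $G_1$, but that majorant is not concave. So the monotonicity must instead come from the slope description: $(\lcm G)'(u)$ is the slope of the touching line, namely
\[
(\lcm G)'(u^+)=\inf_{a<u}\ \sup_{b\ge u}\ \frac{G(b)-G(a)}{b-a}.
\]
This is the standard formula, and it is monotone in $g$ because every such quotient is an average of $g$.

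For (i), $G$ is Lipschitz with constant $\|g\|_\infty$. The constant function $\|g\|_\infty u$-type majorants show that $\lcm G$ is finite, concave and bounded by $\max G$. Moreover $\lcm G(0)=G(0)=0$ and $\lcm G(1)=G(1)$: at the endpoints the majorant touches the function, since an affine function through $(0,0)$ and $(1,G(1))$ plus a Lipschitz correction squeezes it. Concavity together with the endpoint values gives Lipschitz continuity with the same constant. The key claim is $(\lcm G)'=\I g$. Set $f=(\lcm G)'$, which is non-increasing, so $f\in\K$. We verify the characterisation of the projection: $\ip{g-f}{h-f}\le0$ for all $h\in\K$. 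Writing $D=G-\lcm G\le 0$, with $D(0)=D(1)=0$, integration by parts gives
\[
\int(g-f)h=-\int D\,dh\le 0
\]
for non-increasing $h$, since $dh\le0$ and $D\le0$. Also $\int(g-f)f=-\int D\,df=0$, because $f$ is constant on each interval of $\{D<0\}$; there $\lcm G$ is affine, so $df$ charges only the contact set $\{D=0\}$. This is the main obstacle: the integration by parts must be made rigorous for $h$ of bounded variation, which we handle by approximating a general $h\in\K\cap L^2$ by bounded monotone functions truncated near $0$ and $1$. The support claim for $df$ also needs care, and for it we use the fact that $\{D<0\}$ is open and $\lcm G$ is affine on each of its components. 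Once $(\lcm G)'=\I g$ holds, the formula $\lcm G(u)=\int_0^u\I g$ follows from absolute continuity.

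Finally, the second inequality in (iv) is elementary and does not need the projection. If $\sup|G_1-G_2|=\varepsilon$, then $\lcm G_2+\varepsilon$ is a concave majorant of $G_1$, so $\lcm G_1\le\lcm G_2+\varepsilon$. The same argument applies with the roles of $G_1$ and $G_2$ exchanged.
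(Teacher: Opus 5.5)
Your arguments for (i), (ii) and (iv) follow the paper. You use the same variational characterisation of $\I g$ and the same verification that $(\lcm G)'$ satisfies it via $D=G-\lcm G\le0$ with $D(0)=D(1)=0$. Your Stieltjes integration by parts is the Fubini identity \eqref{eq:fubini}, including the fact that the slope measure does not charge $\{D<0\}$ and the truncation of unbounded $k\in\K$. The test functions $0$, $2\I g$, $\I g\pm1$ and the shift argument are also the paper's.

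One sentence in (i) is wrong as stated: concavity plus the endpoint values does not give Lipschitz continuity ($u\mapsto\sqrt u$ is a counterexample). What does give it are the affine majorants $u\mapsto Lu$ and $u\mapsto G(1)+L(1-u)$, $L=\|g\|_\infty$, which agree with $\lcm G$ at $0$ and $1$ respectively. They yield the endpoint values, replacing your ``chord plus Lipschitz correction''. They also bound the one-sided slopes at the endpoints, hence everywhere, by $\pm L$.

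The genuine difference is (iii). You obtain monotonicity from the inf--sup formula for the slope of the least concave majorant, each quotient $(G(b)-G(a))/(b-a)$ being an average of $g$. This works, but going ``through (i)'' does not avoid a citation. Part (i) only identifies $\I g$ with $(\lcm G)'$, so the whole content of (iii) becomes the slope formula, which you assert without proof. The formula is true wherever $\lcm G$ is differentiable, but its proof needs a separate argument for points inside a component of $\{\lcm G>G\}$ and for contact points. Moreover, with $a<u$ strict as written, it returns $(\lcm G)'(u-)$ rather than $(\lcm G)'(u+)$ at kinks; this is harmless only because you need it almost everywhere.

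The paper instead uses a lattice argument that does not depend on (i). With $h_i=\I g_i$, both $h_1\wedge h_2$ and $h_1\vee h_2$ lie in $\K$. On $\{h_1>h_2\}$ one has $(g_1-h_2)^2+(g_2-h_1)^2-(g_1-h_1)^2-(g_2-h_2)^2=2(g_1-g_2)(h_1-h_2)\le0$. Hence $\|g_1-h_1\wedge h_2\|^2+\|g_2-h_1\vee h_2\|^2\le\|g_1-h_1\|^2+\|g_2-h_2\|^2$, and minimality and uniqueness of the projections force $h_1\wedge h_2=h_1$.

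Your route gives an explicit pointwise representation of $\I g$, from which monotonicity and the essential bounds are immediate. However, it is tied to the one-dimensional order structure and needs its own proof. The paper's argument is short and self-contained, and it applies verbatim to the projection onto any closed convex subset of an $L^2$ space that is stable under pointwise minima and maxima.
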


Parts (i) and (iii) are classical results on isotonic regression \citep{BarlowEtAl1972,RobertsonWrightDykstra1988,GroeneboomJongbloed2014}. We include a short proof to keep the paper self-contained.

\begin{proof}
Since $\K$ is a closed convex cone, $h=\I g$ if and only if
\begin{equation}\label{eq:moreau}
h\in\K,\qquad \ip{g-h}{h}=0,\qquad \ip{g-h}{k}\le0\ \text{ for all }k\in\K .
\end{equation}

(i) Let $L=\|g\|_\infty$. The affine functions $u\mapsto Lu$ and $u\mapsto G(1)+L(1-u)$ are concave majorants of $G$, so $\lcm G(0)=0$, $\lcm G(1)=G(1)$, and the one-sided derivatives of the concave function $\lcm G$ lie in $[-L,L]$. Hence $\lcm G$ is Lipschitz continuous. The set $O=\{\lcm G>G\}$ is open in $(0,1)$, and $\lcm G$ is affine on each of its connected components \citep[Chapter~2]{GroeneboomJongbloed2014}.

Let $h$ be the right derivative of $\lcm G$ on $[0,1)$. It is bounded, non-increasing and right-continuous, so $h\in\K$ and $\int_0^uh=\lcm G(u)$. Let $\nu$ be the finite measure on $(0,1)$ with $\nu((a,b])=h(a)-h(b)$; then $h(u)=h(1-)+\nu((u,1))$. For $f\in L^1(0,1)$ with $F(u)=\int_0^uf$, Fubini's theorem gives
\begin{equation}\label{eq:fubini}
\int_0^1f\,h\,du=h(1-)\,F(1)+\int_{(0,1)}F(s)\,d\nu(s).
\end{equation}
Apply this with $f=g-h$, so that $F=G-\lcm G\le0$, $F(1)=0$ and $F=0$ outside $O$. Since $h$ is constant on each component of $O$, $\nu(O)=0$, hence $\ip{g-h}{h}=0$. For a bounded right-continuous $k\in\K$ with associated measure $\nu_k$, the same identity gives $\ip{g-h}{k}=\int F\,d\nu_k\le0$. For general $k\in\K$, apply this to the truncations $(k\wedge m)\vee(-m)\in\K$ and let $m\to\infty$. By \eqref{eq:moreau}, $h=\I g$.

(ii) The first identity is part of \eqref{eq:moreau}. Taking $k=\pm1$ in \eqref{eq:moreau} gives $\int(g-h)=0$. The Pythagorean identity follows by expanding $\|(g-h)+(h-c)\|^2$.

(iii) Let $h_i=\I g_i$. Since $\K$ is a lattice, $h_1\wedge h_2$ and $h_1\vee h_2$ belong to $\K$. On $\{h_1>h_2\}$ we have
\[
(g_1-h_2)^2+(g_2-h_1)^2-(g_1-h_1)^2-(g_2-h_2)^2=2(g_1-g_2)(h_1-h_2)\le0,
\]
while the expressions agree elsewhere. Hence
\[
\|g_1-h_1\wedge h_2\|^2+\|g_2-h_1\vee h_2\|^2\le\|g_1-h_1\|^2+\|g_2-h_2\|^2 .
\]
Since $h_i$ is the unique minimiser of $\|g_i-k\|$ over $k\in\K$, each summand on the left is at least the corresponding one on the right. Thus equality holds in both, and uniqueness yields $h_1\wedge h_2=h_1$, i.e., $h_1\le h_2$. The second statement follows because constants are fixed by $\I$.

(iv) Metric projections onto closed convex sets are non-expansive. If $\delta=\sup|G_1-G_2|$, then $\lcm G_2+\delta$ is a concave majorant of $G_1$, so $\lcm G_1\le\lcm G_2+\delta$, and vice versa.
\end{proof}

\subsection{The projection}

\begin{definition}\label{def:proj}
For $C\in\C$, the \emph{metric projection} of $C$ onto $\CSI$ with respect to $d_1$ is the copula $\Proj C\in\CSI$ satisfying
\[
d_1(C,\Proj C)=\inf_{D\in\CSI}d_1(C,D)=:d_1(C,\CSI).
\]
\end{definition}

If such a copula exists, it is unique: if $D,D'\in\CSI$ both attain the infimum $\delta$, then $\frac12(D+D')\in\CSI$ by convexity, and the parallelogram law gives
\[
d_1(D,D')^2=2\,d_1(C,D)^2+2\,d_1(C,D')^2-4\,d_1\bigl(C,\tfrac12(D+D')\bigr)^2\le4\delta^2-4\delta^2=0,
\]
so $D=D'$. Existence is part of the following theorem.

\begin{theorem}\label{thm:proj}
For every $C\in\C$ the metric projection $\Proj C$ exists, and the following hold.
\begin{enumerate}
\item[(a)] $\partial_1\Proj C(\cdot,v)=\I\bigl(\partial_1C(\cdot,v)\bigr)$ almost everywhere, for every $v\in[0,1]$.
\item[(b)] $C\le\Proj C$ and $\Pi\le\Proj C$. If $D\in\CSI$ and $D\ge C$, then $D\ge\Proj C$. Thus $\Proj C$ is the smallest SI copula dominating $C$. Moreover, $\Proj C=C$ if and only if $C\in\CSI$.
\item[(c)] For every $v\in[0,1]$, the section $\Proj C(\cdot,v)$ is the least concave majorant of $C(\cdot,v)$.
\item[(d)] $d_1(C,\Pi)^2=d_1(\Proj C,\Pi)^2+d_1(C,\Proj C)^2$.
\item[(e)] If $C_1\le C_2$, then $\Proj C_1\le\Proj C_2$. Moreover,
\[
\|\Proj C_1-\Proj C_2\|_\infty\le\|C_1-C_2\|_\infty\quad\text{and}\quad d_1(\Proj C_1,\Proj C_2)\le d_1(C_1,C_2).
\]
\end{enumerate}
\end{theorem}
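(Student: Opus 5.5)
The plan is to define the candidate directly and then verify it. For $C\in\C$ put $P(u,v):=\lcm{C(\cdot,v)}(u)$ for every $v\in[0,1]$. Applying Lemma~\ref{lem:iso}(i) to $g=\partial_1C(\cdot,v)$ gives $P(u,v)=\int_0^u\I(\partial_1C(\cdot,v))\,ds$, $P(0,v)=0$ and $P(1,v)=C(1,v)=v$. The concave majorant of $C(\cdot,0)\equiv0$ is $0$, so $P(u,0)=0$. The function $u\mapsto u$ is concave and dominates $C(\cdot,1)$, hence $P(u,1)=u$.

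The first real step is to show that $P$ is $2$-increasing, i.e.\ that $P(u_2,\cdot)-P(u_1,\cdot)$ is non-decreasing for $u_1<u_2$. Equivalently, $\I(\partial_1C(\cdot,v))$ should be non-decreasing in $v$. For $v<v'$ the function $\partial_1C(\cdot,v)\le\partial_1C(\cdot,v')$ a.e., because $C$ is $2$-increasing. Lemma~\ref{lem:iso}(iii) then gives $\I\partial_1C(\cdot,v)\le\I\partial_1C(\cdot,v')$ a.e. Integrating over $[u_1,u_2]$ yields the rectangle inequality, so $P$ is a copula. Its sections are concave, so $P\in\CSI$, and (a) and (c) hold by construction.

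To see that $P$ is the metric projection, take any $D\in\CSI$. Then $\partial_1D(\cdot,v)\in\K$ for every $v$. The projection property of $\I$ in $L^2(0,1)$ gives $\|\partial_1C(\cdot,v)-\I\partial_1C(\cdot,v)\|\le\|\partial_1C(\cdot,v)-\partial_1D(\cdot,v)\|$ for each $v$, and integrating over $v$ gives $d_1(C,P)\le d_1(C,D)$. By the uniqueness already shown, $P=\Proj C$. Measurability of $(u,v)\mapsto\partial_1P$ needs a remark: $P$ is a copula, so $\partial_1P$ exists a.e.\ as a jointly measurable function and agrees sectionwise with $\I\partial_1C$.

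For (d), I apply Lemma~\ref{lem:iso}(ii) with the constant $c=v$, since $\partial_1\Pi=v$, and integrate over $v$.

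For (b), a least concave majorant dominates the function, so $C\le\Proj C$. Concavity of $\Proj C(\cdot,v)$ together with the endpoint values $0$ and $v$ gives $\Proj C(u,v)\ge uv$. If $D\in\CSI$ and $D\ge C$, then each section $D(\cdot,v)$ is a concave majorant of $C(\cdot,v)$, hence $D\ge\Proj C$. If $C\in\CSI$, its sections are concave, so $\lcm{C(\cdot,v)}=C(\cdot,v)$ and $\Proj C=C$. Conversely, $\Proj C=C$ forces $C\in\CSI$.

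For (e), monotonicity is immediate: $C_1\le C_2\le\Proj C_2$ and $\Proj C_2\in\CSI$, so (b) gives $\Proj C_1\le\Proj C_2$. The sup-norm bound follows sectionwise from the second part of Lemma~\ref{lem:iso}(iv). The $d_1$ bound follows by applying the first part of Lemma~\ref{lem:iso}(iv) sectionwise, squaring and integrating in $v$.

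I expect the main obstacle to be the $2$-increasing property of $P$ in the first step. Beyond that, the only delicate point is that the a.e.\ statements hold for every $v$, not merely for a.e.\ $v$. I will handle this by working with the integrated form $P(u,v)$, which is defined for every $v$ through the concave majorant, instead of with derivatives.
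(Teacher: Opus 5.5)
Your proposal is correct and follows essentially the same route as the paper. You take the sectionwise least concave majorant, obtain the $2$-increasing property from the order preservation in Lemma~\ref{lem:iso}(iii), prove optimality by comparing sectionwise with $\partial_1 D(\cdot,v)\in\K$, and derive (b)--(e) from the majorant property and Lemma~\ref{lem:iso}(ii),(iv). The only cosmetic difference is that you deduce the monotonicity in (e) from the minimality in (b) rather than directly from (c); this is the same argument.
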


\begin{proof}
For $v\in[0,1]$ put $g_v=\partial_1C(\cdot,v)$ and $G_v=C(\cdot,v)=\int_0^\cdot g_v$, and define $Q(u,v)=\lcm G_v(u)$. We first show that $Q$ is an SI copula with $\partial_1Q(\cdot,v)=\I g_v$, and then that $Q$ attains the infimum in Definition~\ref{def:proj}.

By Lemma~\ref{lem:iso}(i), $Q(\cdot,v)$ is Lipschitz continuous with $Q(0,v)=0$, $Q(1,v)=v$ and derivative $\I g_v$. For $v=0$ we have $G_0\equiv0$, and for $v=1$ the section $G_1(u)=u$ is concave; hence $Q(u,0)=0$ and $Q(u,1)=u$. Let $0\le v_1<v_2\le1$. Since $C$ is $2$-increasing, $G_{v_2}-G_{v_1}$ is non-decreasing, so $g_{v_1}\le g_{v_2}$ almost everywhere. By Lemma~\ref{lem:iso}(iii), $\I g_{v_1}\le\I g_{v_2}$, so
\[
u\mapsto Q(u,v_2)-Q(u,v_1)=\int_0^u\bigl(\I g_{v_2}-\I g_{v_1}\bigr)
\]
is non-decreasing. This is the $2$-increasing property. Thus $Q$ is a copula, and it is SI because its vertical sections are concave.

Now let $D\in\CSI$. For every $v$, $\partial_1D(\cdot,v)\in\K$, so $\|g_v-\partial_1D(\cdot,v)\|_{L^2}\ge\|g_v-\I g_v\|_{L^2}=\|g_v-\partial_1Q(\cdot,v)\|_{L^2}$. Integration over $v$ gives $d_1(C,D)\ge d_1(C,Q)$. Hence the infimum in Definition~\ref{def:proj} is attained at $Q\in\CSI$, so that $\Proj C$ exists and $\Proj C=Q$. This proves (c), and (a) follows from the first step.

(b) $C\le\Proj C$ because $\Proj C(\cdot,v)$ is a majorant of $G_v$. The concave function $\Proj C(\cdot,v)$ takes the values $0$ and $v$ at $0$ and $1$, hence lies above $u\mapsto uv$. If $D\in\CSI$ and $D\ge C$, then $D(\cdot,v)$ is a concave majorant of $G_v$, hence $D(\cdot,v)\ge\lcm G_v=\Proj C(\cdot,v)$. Finally, $\Proj C=C$ if and only if $C\in\CSI$, by Definition~\ref{def:proj} and the uniqueness of the metric projection.

(d) Apply Lemma~\ref{lem:iso}(ii) to $g_v$ with $c=v=\partial_1\Pi(\cdot,v)$ and integrate over $v$.

(e) If $C_1\le C_2$, then $\Proj C_2(\cdot,v)$ is a concave majorant of $C_1(\cdot,v)$ by (c), so $\Proj C_1\le\Proj C_2$. The two estimates follow from (c) and Lemma~\ref{lem:iso}(iv), the second after integration over $v$.
\end{proof}

\begin{remark}\label{rem:comp}
For a checkerboard copula, $\partial_1C(\cdot,v)$ is a step function, and $\I$ reduces to the pool-adjacent-violators algorithm with equal weights. The least concave majorant of copula sections is the central object of the tests of stochastic monotonicity of \citet{DelgadoEscanciano2012} and \citet{Seo2018}. Theorem~\ref{thm:proj}(c) shows that it coincides with the metric projection in the geometry of $\xi$.
\end{remark}

\begin{remark}\label{rem:rearr}
\citet{StrothmannDetteSiburg2024} assign to $C$ the SI copula $C^{\mathrm{re}}$ whose derivative $\partial_1C^{\mathrm{re}}(\cdot,v)$ is the non-increasing rearrangement of $\partial_1C(\cdot,v)$. By the Hardy--Littlewood inequality,
\[
\int_0^u\partial_1C(s,v)\,ds\le\int_0^u\partial_1C^{\mathrm{re}}(s,v)\,ds\quad\text{for all }u,v\in[0,1],
\]
i.e., $C\le C^{\mathrm{re}}$. Theorem~\ref{thm:proj}(b) therefore gives $C\le\Proj C\le C^{\mathrm{re}}$.
Rearrangement preserves the $L^2$-norm of every section, so $\xi(C^{\mathrm{re}})=\xi(C)$: it reorders the values of $X$ until the entire functional dependence becomes monotone. The projection does not reorder; it retains exactly the part of $\xi$ that is explained by non-decreasing functions of $X$ and discards $6\,d_1(C,\CSI)^2$.
\end{remark}

\section{The monotone part of Chatterjee's rank correlation}\label{sec:xiu}

\begin{definition}\label{def:xiu}
For $C\in\C$ let
\[
\xiu(C)=\xi(\Proj C)=6\,d_1(\Proj C,\Pi)^2,\qquad \xid(C)=\xiu(C^-),
\]
where $C^-(u,v)=v-C(1-u,v)$ is the copula of $(1-U,V)$ for $(U,V)\sim C$.
\end{definition}

Throughout, the \emph{monotone part} $\xiu$ refers to increasing dependence; decreasing dependence is captured by $\xid$. In particular, $\xi-\xiu$ is not the non-monotone part of $\xi$: for the countermonotonicity copula, $\xi(W)-\xiu(W)=\xid(W)=1$. A genuinely non-monotone part is identified in Proposition~\ref{prop:three}.

We need an elementary majorisation inequality.

\begin{lemma}\label{lem:hlp}
Let $h_1,h_2\colon[0,1]\to[0,1]$ be non-increasing with $\int_0^uh_1\le\int_0^uh_2$ for all $u\in[0,1]$ and equality for $u=1$. Then $\int_0^1h_1^2\le\int_0^1h_2^2$.
\end{lemma}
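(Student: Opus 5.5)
The plan is to write $H_i(u)=\int_0^uh_i$ and to compare $\int h_1^2$ with $\int h_2^2$ through the difference $\int h_2^2-\int h_1^2=\int(h_2-h_1)(h_2+h_1)$. Rather than handle this product directly, I will use the convexity inequality $h_2^2-h_1^2\ge 2h_1(h_2-h_1)$ pointwise. This reduces the claim to showing $\int_0^1 h_1\,(h_2-h_1)\,du\ge0$. The point of this choice is that $h_1$ is non-increasing, so it can be written as a mixture of indicators of initial intervals, which are exactly the test functions controlled by the hypothesis.

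The key step is the layer-cake (Abel summation) representation of $h_1$. Since $h_1$ is non-increasing with values in $[0,1]$, we may modify it on a null set so that it is right-continuous. Then
\[
h_1(u)=\int_0^1 \1\{u<\tau(t)\}\,dt,\qquad \tau(t)=\sup\{u: h_1(u)>t\}\ (\sup\emptyset=0),
\]
since $\{u:h_1(u)>t\}$ is an initial interval of length $\tau(t)$. By Fubini,
\[
\int_0^1 h_1(h_2-h_1)\,du=\int_0^1\bigl(H_2(\tau(t))-H_1(\tau(t))\bigr)\,dt\ \ge0,
\]
because $H_1\le H_2$ everywhere by assumption. (This is the same Fubini identity as \eqref{eq:fubini} in the proof of Lemma~\ref{lem:iso}, applied with $f=h_2-h_1$, $F=H_2-H_1\ge0$, and $h=h_1$. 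The boundary term $h_1(1-)\,F(1)$ vanishes because of the equality at $u=1$.) Combined with the convexity bound, this gives $\int h_2^2-\int h_1^2\ge 2\int h_1(h_2-h_1)\ge0$.

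I do not expect a real obstacle here. The only care needed is at the boundary and with the sign conventions. The equality at $u=1$ is used precisely to kill the constant part of $h_1$, namely the boundary term $h_1(1-)F(1)$ in \eqref{eq:fubini}; with the layer-cake formula above no such term appears, because $h_1\ge0$. Alternatively, I could invoke \eqref{eq:fubini} verbatim with $h=h_1$. In that case I must check that its measure $\nu$ is non-negative, which holds because $h_1$ is non-increasing, and that $F(1)=0$. Monotonicity of $h_2$ is not actually needed: only $h_1$ must be non-increasing and bounded.
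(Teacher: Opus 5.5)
Your proof is correct and is essentially the paper's argument: the pointwise bound $h_2^2-h_1^2\ge 2h_1(h_2-h_1)$ is the paper's step of dropping $\int(h_2-h_1)^2\ge0$, and you then show $\int_0^1 h_1(h_2-h_1)\ge0$ by summation by parts against the non-increasing $h_1$. The only difference is technical. You justify that step with the layer-cake/Fubini identity, i.e.\ the paper's own \eqref{eq:fubini}, rather than with the Stieltjes integration by parts $\int_0^1 h_1\,d\Delta=-\int_0^1\Delta\,dh_1$, and this makes it visible that, since $h_1\ge0$, the inequality $\int_0^1h_1\le\int_0^1h_2$ suffices in place of equality at $u=1$.
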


\begin{proof}
Put $\Delta(u)=\int_0^u(h_2-h_1)\ge0$, so that $\Delta(0)=\Delta(1)=0$. Then
\[
\int h_2^2-\int h_1^2=\int(h_2-h_1)^2+2\int_0^1h_1\,d\Delta\ \ge\ 2\int_0^1h_1\,d\Delta=-2\int_0^1\Delta\,dh_1\ \ge0 .
\]
The equality follows from integration by parts for Stieltjes integrals, since $h_1$ has bounded variation and $\Delta$ is continuous and vanishes at the endpoints. The last inequality holds because $\Delta\ge0$ and, $h_1$ being non-increasing, the Lebesgue--Stieltjes measure $dh_1$ is non-positive.
\end{proof}

Lemma~\ref{lem:hlp} is the case $\varphi(x)=x^2$ of the Hardy--Littlewood--P\'olya inequality \citep{HardyLittlewoodPolya1952}. It implies that $\xi$ is increasing in the concordance order on $\CSI$, as observed by \citet{AnsariRockel2026}.

\emph{Polarity of stochastic increasingness and negative quadrant dependence.} The decomposition of $\xi$ has a second, dual reading, which rests on the Hilbert space structure. Let $\mathcal H$ be the space of all functions $A(u,v)=\int_0^ua(s,v)\,ds$ with $a\in L^2([0,1]^2)$, equipped with $\ip{\cdot}{\cdot}_1$; it is isometric to $L^2([0,1]^2)$ and contains $C-\Pi$ for every $C\in\C$. Let
\[
\mathfrak K=\bigl\{A\in\mathcal H:\ A(\cdot,v)\ \text{is concave for almost every }v\bigr\},
\]
a closed convex cone, and let $\mathfrak K^\circ=\{B\in\mathcal H:\ip{B}{A}_1\le0\text{ for all }A\in\mathfrak K\}$ be its polar cone. By the theorem of \citet{Moreau1962}, every $x\in\mathcal H$ decomposes uniquely as $x=P_{\mathfrak K}x+P_{\mathfrak K^\circ}x$ with $\ip{P_{\mathfrak K}x}{P_{\mathfrak K^\circ}x}_1=0$, where $P$ denotes the metric projection, and $\|P_{\mathfrak K}x\|_1=\operatorname{dist}_1(x,\mathfrak K^\circ)$, $\|P_{\mathfrak K^\circ}x\|_1=\operatorname{dist}_1(x,\mathfrak K)$.

\begin{theorem}\label{thm:polar}
The following hold.
\begin{enumerate}
\item[(a)] $\mathfrak K^\circ=\bigl\{B\in\mathcal H:\ B(\cdot,v)\le0\text{ and }B(1,v)=0\text{ for almost every }v\bigr\}$.
\item[(b)] $\CSI=\C\cap(\Pi+\mathfrak K)$ and $\C_{\mathrm{NQD}}:=\{C\in\C:C\le\Pi\}=\C\cap(\Pi+\mathfrak K^\circ)$.
\item[(c)] For every $C\in\C$, $P_{\mathfrak K}(C-\Pi)=\Proj C-\Pi$ and $P_{\mathfrak K^\circ}(C-\Pi)=C-\Proj C$. Thus
\[
C-\Pi=(\Proj C-\Pi)+(C-\Proj C)
\]
is the Moreau decomposition of $C-\Pi$, and in particular $\ip{\Proj C-\Pi}{C-\Proj C}_1=0$.
\item[(d)] For every $C\in\C$,
\begin{align*}
\xiu(C)&=6\operatorname{dist}_1(C-\Pi,\mathfrak K^\circ)^2\le6\,d_1(C,\C_{\mathrm{NQD}})^2,\\
\xi(C)-\xiu(C)&=6\operatorname{dist}_1(C-\Pi,\mathfrak K)^2=6\,d_1(C,\CSI)^2 .
\end{align*}
\end{enumerate}
\end{theorem}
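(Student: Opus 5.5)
The plan is to reduce everything to the one-dimensional Moreau decomposition of Lemma~\ref{lem:iso}, applied section by section, using the isometry $A\mapsto a=\partial_1A$ between $\mathcal H$ and $L^2([0,1]^2)$. Under this isometry, $\mathfrak K$ corresponds to the cone $\tilde{\mathfrak K}$ of $a$ with $a(\cdot,v)\in\K$ for a.e.\ $v$. The projection onto $\tilde{\mathfrak K}$ is therefore $a\mapsto(\I a(\cdot,v))_v$, provided this is jointly measurable. Measurability follows because $\I a(\cdot,v)$ is the right derivative of the least concave majorant of $\int_0^\cdot a(s,v)\,ds$, which is jointly measurable as a countable infimum over affine majorants with rational coefficients. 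Minimality then holds pointwise in $v$ and integrates.

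For (a), take $B\in\mathcal H$ with $b=\partial_1B$. First, $\pm\phi(v)$ for $\phi\in L^2(0,1)$, viewed as a function of $(u,v)$ constant in $u$, lies in $\tilde{\mathfrak K}$; these correspond to $A(u,v)=\pm u\phi(v)$, which are affine in $u$. So $B\in\mathfrak K^\circ$ forces $\int\phi(v)B(1,v)\,dv=0$ for all $\phi$, that is, $B(1,v)=0$ a.e. Next, for $k\in\K$ bounded and right-continuous, the Fubini identity \eqref{eq:fubini} gives $\int_0^1 b(u,v)k(u)\,du=k(1-)B(1,v)+\int B(s,v)\,d\nu_k(s)$. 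Testing with $\phi(v)\1_{[0,t)}(u)$, $\phi\ge0$, which lies in $\tilde{\mathfrak K}$, yields $\int\phi(v)B(t,v)\,dv\le0$. Hence $B(t,\cdot)\le0$ a.e.\ for each rational $t$, and by continuity in $t$ for all $t$. Conversely, if $B(\cdot,v)\le0$ and $B(1,v)=0$, the same identity with truncation, exactly as in Lemma~\ref{lem:iso}, gives $\int b\,a\le0$ for every $a\in\tilde{\mathfrak K}$, after integrating in $v$.

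For (b), note that $C-\Pi\in\mathfrak K$ if and only if $C(\cdot,v)-uv$ is concave for a.e.\ $v$, if and only if $C(\cdot,v)$ is concave for a.e.\ $v$. Continuity of $C$ in $v$ upgrades this to every $v$, since pointwise limits of concave functions are concave. For the second identity, $(C-\Pi)(1,v)=v-v=0$ always holds, so by (a) membership in $\Pi+\mathfrak K^\circ$ reduces to $C\le\Pi$ a.e.\ in $v$, and again by continuity to every $v$.

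For (c), I would verify the three Moreau conditions \eqref{eq:moreau} for $x=C-\Pi$ and $h=\Proj C-\Pi$. First, $h\in\mathfrak K$ by Theorem~\ref{thm:proj}. Second, $x-h=C-\Proj C$ lies in $\mathfrak K^\circ$ by (a): it is $\le0$ by Theorem~\ref{thm:proj}(b), and it vanishes at $u=1$ since $\lcm G(1)=G(1)$. Third, orthogonality: by Lemma~\ref{lem:iso}(ii), $\int(g_v-\I g_v)(\I g_v-v)\,du=0$, because the term against the constant $v$ vanishes by $\int(g_v-\I g_v)=0$; integrating over $v$ gives the claim. Alternatively, the translation by $\partial_1\Pi=v$ commutes with $\I$, since $\I(g+c)=\I g+c$. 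Moreau's theorem then identifies the two projections.

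For (d), combine (c) with the Moreau distance identities. First, $\operatorname{dist}_1(C-\Pi,\mathfrak K^\circ)=\|\Proj C-\Pi\|_1$, whose square times $6$ is $\xiu(C)$. Since $\C_{\mathrm{NQD}}-\Pi\subset\mathfrak K^\circ$ by (b), this distance is at most $d_1(C,\C_{\mathrm{NQD}})$. Second, $\operatorname{dist}_1(C-\Pi,\mathfrak K)=\|C-\Proj C\|_1=d_1(C,\CSI)$, and Theorem~\ref{thm:proj}(d) converts its square into $(\xi-\xiu)/6$.

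The main obstacle I expect is the characterisation of the polar cone in (a). It requires care with null sets in $v$, with the joint measurability of test functions, and with passing from bounded right-continuous $k$ to general elements of $\tilde{\mathfrak K}$ that vary with $v$. The remaining parts are essentially bookkeeping.
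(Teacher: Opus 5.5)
Your proposal is correct. For (a), (b) and (d) it is essentially the paper's argument: your test functions $\phi(v)\1_{[0,t)}(u)$ and $\pm\phi(v)$ play the role of the paper's $\1_{[0,u)}(s)\1_V(v)$ and $\sigma\1_V(v)$, and your (b) is more careful than the paper about upgrading ``almost every $v$'' to ``every $v$''.

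The one real difference is in (c).
\begin{itemize}
\item The paper identifies $P_{\mathfrak K}(C-\Pi)$ by a primal argument that does not use (a). For $A\in\mathfrak K$ it bounds $\|C-\Pi-A\|_1^2=\int_0^1\|g_v-v-a(\cdot,v)\|_{L^2}^2\,dv$ from below, section by section, by the isotonic residual. It then uses $\I(g_v-v)=\I g_v-v$ and Theorem~\ref{thm:proj}(a) to see that the bound is attained at $\Proj C-\Pi$, and reads off $P_{\mathfrak K^\circ}$ from Moreau's theorem.
\item You instead verify the dual optimality conditions. These are $\Proj C-\Pi\in\mathfrak K$; $C-\Proj C\in\mathfrak K^\circ$, via (a) together with $C\le\Proj C$ and equal values at $u=1$; and orthogonality from Lemma~\ref{lem:iso}(ii).
\end{itemize}
Your route needs the polar-cone characterisation as an input. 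In return, it makes transparent that the domination $C\le\Proj C$ of Theorem~\ref{thm:proj}(b) is exactly the statement that the residual lies in the polar cone. The paper's route is more self-contained, since it never needs $\mathfrak K^\circ$ explicitly.

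Two smaller points. Your preliminary measurability discussion for the section-wise projection of a general $a$ is not needed in either route, because the minimiser $\Proj C-\Pi$ is explicit. The obstacle you anticipate in (a) also does not arise: for $A\in\mathfrak K$, one applies the one-dimensional polar statement to the fixed function $a(\cdot,v)\in\K$ for almost every $v$, and then integrates over $v$.
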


\begin{proof}
(a) We first identify the polar cone $\K^\circ$ of $\K$ in $L^2(0,1)$: for $w\in L^2(0,1)$ with $W(u)=\int_0^uw$, we claim that $w\in\K^\circ$ if and only if $W\le0$ and $W(1)=0$. If $W\le0$ and $W(1)=0$, then \eqref{eq:fubini} gives $\ip{w}{k}=\int W\,d\nu_k\le0$ for every bounded right-continuous $k\in\K$, and truncation extends this to all $k\in\K$. Conversely, testing with $k=\pm1$ and $k=\1_{[0,u)}$ gives $W(1)=0$ and $W(u)\le0$. Now let $B\in\mathcal H$ with $B(u,v)=\int_0^ub(s,v)\,ds$. If $b(\cdot,v)\in\K^\circ$ for almost every $v$, then $\ip{B}{A}_1=\int_0^1\ip{b(\cdot,v)}{a(\cdot,v)}\,dv\le0$ for all $A\in\mathfrak K$. If not, then there are a rational $u$, a sign $\sigma\in\{\pm1\}$ and a set $V\subset[0,1]$ of positive measure such that $B(u,v)>0$ for all $v\in V$, or such that $\sigma B(1,v)>0$ for all $v\in V$. The function $A$ with $\partial_1A(s,v)=\1_{[0,u)}(s)\1_V(v)$, respectively $\partial_1A(s,v)=\sigma\1_V(v)$, belongs to $\mathfrak K$ and satisfies $\ip{B}{A}_1>0$, so $B\notin\mathfrak K^\circ$.

(b) A copula $D$ is SI if and only if every section $D(\cdot,v)$ is concave, i.e., if and only if $D-\Pi\in\mathfrak K$. Moreover, $D-\Pi$ vanishes at $u=1$ for every copula $D$, so (a) shows that $D-\Pi\in\mathfrak K^\circ$ if and only if $D\le\Pi$.

(c) For $A\in\mathfrak K$ we have $\|C-\Pi-A\|_1^2=\int_0^1\|g_v-v-a(\cdot,v)\|_{L^2}^2\,dv\ge\int_0^1\|g_v-v-\I(g_v-v)\|_{L^2}^2\,dv$, where $g_v=\partial_1C(\cdot,v)$. Since $\I(g_v-v)=\I g_v-v=\partial_1(\Proj C-\Pi)(\cdot,v)$ by Theorem~\ref{thm:proj}(a), equality is attained at $A=\Proj C-\Pi\in\mathfrak K$. Hence $P_{\mathfrak K}(C-\Pi)=\Proj C-\Pi$, and Moreau's theorem gives $P_{\mathfrak K^\circ}(C-\Pi)=C-\Proj C$.

(d) By (c), \eqref{eq:geom} and Moreau's theorem, $\xiu(C)=6\|\Proj C-\Pi\|_1^2=6\|P_{\mathfrak K}(C-\Pi)\|_1^2=6\operatorname{dist}_1(C-\Pi,\mathfrak K^\circ)^2$. For every $D\in\C_{\mathrm{NQD}}$ we have $D-\Pi\in\mathfrak K^\circ$ by (b), so $\operatorname{dist}_1(C-\Pi,\mathfrak K^\circ)\le d_1(C,D)$. The second identity follows in the same way from $\|C-\Proj C\|_1=d_1(C,\CSI)$.
\end{proof}

\begin{remark}\label{rem:polar}
Theorem~\ref{thm:polar} shows that, in the tangent space at $\Pi$ of the geometry in which $\xi$ is a squared distance, stochastic increasingness and negative quadrant dependence correspond to mutually polar cones. The monotone part $\xiu$ measures the distance to the (relaxed) class of negatively quadrant dependent directions, and the remainder $\xi-\xiu$ measures the distance to stochastic increasingness; it contains decreasing as well as genuinely non-monotone dependence (Proposition~\ref{prop:three}). The two projections behave differently: $\Pi+P_{\mathfrak K}(C-\Pi)=\Proj C$ is always a copula (Theorem~\ref{thm:proj}), whereas $\Pi+P_{\mathfrak K^\circ}(C-\Pi)=\Pi+C-\Proj C$ need not be one. For the copula $C$ of Example~\ref{ex:tent}, for instance, $\partial_1(\Pi+C-\Proj C)(u,v)=v(v-1)/(1+v)<0$ for $u<\frac{1-v}2$. In this case the inequality in Theorem~\ref{thm:polar}(d) is strict: otherwise a minimising sequence $D_n\in\C_{\mathrm{NQD}}$ would satisfy $D_n-\Pi\to P_{\mathfrak K^\circ}(C-\Pi)$ in $\mathcal H$, as every minimising sequence in a closed convex set does, and completeness of $(\C,d_1)$ \citep{Trutschnig2011} would make $\Pi+C-\Proj C$ a copula. If $\Pi+C-\Proj C$ is a copula, it is the closest NQD copula to $C$, and equality holds. Applying Theorem~\ref{thm:polar} to $C^-$ yields the analogous polarity of stochastically decreasing and positively quadrant dependent copulas, with $\xid$ in place of $\xiu$:
\begin{center}
\begin{tabular}{lll}
\toprule
Cone of directions & Polar cone & Monotone part\\
\midrule
SI (concave sections) & NQD ($C\le\Pi$) & $\xiu$\\
SD (convex sections) & PQD ($C\ge\Pi$) & $\xid$\\
\bottomrule
\end{tabular}
\end{center}
\end{remark}

\begin{theorem}\label{thm:xiu}
For all $C,C_1,C_2,C_n\in\C$ the following hold.
\begin{enumerate}
\item[(a)] \emph{Decomposition.} $\xi(C)=\xiu(C)+6\,d_1(C,\CSI)^2$; in particular, $0\le\xiu(C)\le\xi(C)\le1$.
\item[(b)] $\xiu(C)=\xi(C)$ if and only if $C\in\CSI$.
\item[(c)] $\xiu(C)=0$ if and only if $C\le\Pi$, i.e., $C$ is NQD.
\item[(d)] $\xiu(C)=1$ if and only if $C=M$.
\item[(e)] \emph{Concordance monotonicity.} If $C_1\le C_2$, then $\xiu(C_1)\le\xiu(C_2)$.
\item[(f)] \emph{Continuity.} $\bigl|\sqrt{\xiu(C_1)}-\sqrt{\xiu(C_2)}\bigr|\le\sqrt6\,d_1(C_1,C_2)$. Moreover, $\xiu$ is weakly continuous: if $C_n\to C$ uniformly (equivalently, if the associated probability measures converge weakly), then $\xiu(C_n)\to\xiu(C)$.
\item[(g)] \emph{Spearman bound.} $\xiu(C)\ge\tfrac12\bigl(\rho_S(C)_+\bigr)^2$, where $\rho_S(C)=12\int C\,d\lambda-3$ is Spearman's rho. The constant $\frac12$ is best possible, and the inequality is strict whenever $\rho_S(C)>0$.
\end{enumerate}
\end{theorem}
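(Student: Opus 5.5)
Most parts are read off from Theorem~\ref{thm:proj} and Theorem~\ref{thm:polar}; only (d) and (g) need new work. For (a), combine Theorem~\ref{thm:proj}(d) with $\xi=6d_1(\cdot,\Pi)^2$ and $d_1(C,\Proj C)=d_1(C,\CSI)$. The bounds $0\le\xiu\le\xi\le1$ then follow at once. For (b), (a) gives $\xiu(C)=\xi(C)$ if and only if $d_1(C,\CSI)=0$. Since $\Proj C$ attains this distance and $d_1$ is a metric, this means $C=\Proj C$, that is, $C\in\CSI$ by Theorem~\ref{thm:proj}(b). For (c), $\xiu(C)=0$ means $\Proj C=\Pi$, because $\xi$ vanishes only at $\Pi$. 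By Theorem~\ref{thm:proj}(b), $C\le\Proj C$, so $\Proj C=\Pi$ forces $C\le\Pi$. Conversely, if $C\le\Pi$, then $\Pi$ is an SI copula dominating $C$, so $\Proj C\le\Pi\le\Proj C$ and hence $\Proj C=\Pi$. Alternatively, (c) follows from Theorem~\ref{thm:polar}(b),(d), since $\operatorname{dist}_1(C-\Pi,\mathfrak K^\circ)=0$ if and only if $C-\Pi\in\mathfrak K^\circ$.

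For (d), $\xiu(C)=1$ forces $\xi(C)=1$ and $\xi(C)=\xiu(C)$, so $C$ is SI and $\partial_1C\in\{0,1\}$ almost everywhere. A non-increasing $\{0,1\}$-valued function with integral $v$ is $\1_{[0,v)}$, so $C(u,v)=\min(u,v)=M$. The converse is clear.

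For (e), Theorem~\ref{thm:proj}(e) gives $\Proj C_1\le\Proj C_2$, and both are SI. For each $v$, set $h_i=\partial_1\Proj C_i(\cdot,v)$. These are non-increasing with values in $[0,1]$ by Lemma~\ref{lem:iso}(iii), they have equal integrals $v$, and their partial integrals are ordered. Lemma~\ref{lem:hlp} then gives $\int h_1^2\le\int h_2^2$. Integrating over $v$ and using \eqref{eq:xi-copula} yields (e).

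For (f), $\sqrt{\xiu(C)}=\sqrt6\,d_1(\Proj C,\Pi)$. The triangle inequality together with the non-expansiveness in Theorem~\ref{thm:proj}(e) gives the Lipschitz bound. For weak continuity, uniform convergence $C_n\to C$ gives $\Proj C_n\to\Proj C$ uniformly, again by Theorem~\ref{thm:proj}(e). The key point is that uniform convergence of SI copulas implies $d_1$-convergence, because concave sections converging uniformly have derivatives converging almost everywhere. Concretely, for fixed $v$ the derivatives of uniformly convergent concave functions converge at every point where the limit is differentiable. Dominated convergence (all values in $[0,1]$) then gives $L^2$ convergence in $(u,v)$. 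It follows that $\xi(\Proj C_n)\to\xi(\Proj C)$.

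For (g), I would assume $\rho_S(C)>0$. Since $\Proj C\ge C$, we have $\rho_S(\Proj C)\ge\rho_S(C)$, and (a) of this theorem lets us reduce to an SI copula $D=\Proj C$. It then suffices to show $\xi(D)\ge\frac12\rho_S(D)^2$, with strict inequality. Write $a=\partial_1D-v$, so that $\int_0^1 a(s,v)\,ds=0$ and
\[
\rho_S(D)=12\int\!\!\int (D-\Pi)\,d\lambda=12\int\!\!\int (1-u)\,a(u,v)\,du\,dv .
\]
Since $\int a(\cdot,v)=0$, we may replace $1-u$ by $\tfrac12-u$. Cauchy--Schwarz then gives
\[
\rho_S^2\le 144\cdot\tfrac1{12}\int\!\!\int a^2=12\,d_1(D,\Pi)^2=2\xi(D),
\]
which is exactly the bound $\xiu\ge\frac12\rho_S^2$. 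Note that monotonicity of $D$ is not even needed here, only the reduction $\rho_S(\Proj C)\ge\rho_S(C)$. Equality in Cauchy--Schwarz would require $a(u,v)=c(v)(\tfrac12-u)$. I expect the main obstacle to be showing that such an $a$ is incompatible with $\partial_1D\in[0,1]$ and $2$-increasingness for all $v$, including near $v=0$ and $v=1$. My plan is to use the constraints $|c(v)|\le 2\min(v,1-v)$ together with the 2-increasing condition to force $c\equiv0$. Sharpness of the constant $\frac12$ would come from the family $D_\theta=\Pi+\theta\,u(1-u)v(1-v)$ type perturbations as $\theta\to0$, where the ratio $\xi/\rho_S^2$ tends to the Cauchy--Schwarz limit.
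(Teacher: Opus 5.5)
Your parts (a)--(f) are sound. Routing (a)--(c) through Theorem~\ref{thm:proj}(b),(d) instead of the Moreau decomposition of Theorem~\ref{thm:polar} is legitimate. Your proof of (c) is a nice elementary alternative: if $C\le\Pi$, then $\Pi$ is an SI copula dominating $C$, so $\Proj C\le\Pi\le\Proj C$. The gaps are in (g).

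\textbf{Strictness.} You apply Cauchy--Schwarz in $L^2([0,1]^2)$, so equality forces $a=\lambda(\tfrac12-u)$ with a \emph{constant} $\lambda\ge0$, not merely $a=c(v)(\tfrac12-u)$. Having weakened the equality condition, your plan to force $c\equiv0$ from $0\le\partial_1D\le1$ and $2$-increasingness cannot succeed, because nonzero profiles are admissible. For $c(v)=\min\{c,2v,2(1-v)\}$ with $0<c\le1$, the function $D(u,v)=uv+\tfrac12c(v)\,u(1-u)$ is an SI copula with $\partial_1D-v=c(v)(\tfrac12-u)$. With $\lambda$ constant the argument is immediate. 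The condition $\partial_1D=v+\lambda(\tfrac12-u)\ge0$ near $u=1$ forces $\lambda\le2v$ for almost every $v$. Hence $\lambda=0$ and $\rho_S(D)=0$, contradicting $\rho_S(D)\ge\rho_S(C)>0$.

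\textbf{Sharpness.} Your example does not work. The family $\Pi+\theta\,u(1-u)v(1-v)$ is the FGM family, with $\rho_S=\theta/3$ and $\xiu=\xi=\theta^2/15$. So $\xiu/\rho_S^2=\tfrac35$ for every $\theta>0$. Since $\xi$ and $\rho_S^2$ are both homogeneous of degree two in the size of the perturbation, letting $\theta\to0$ changes nothing. What matters is the shape of the $v$-profile. For $\partial_1D=v+c(v)(\tfrac12-u)$ one has $\rho_S(D)=\int_0^1c$ and $\xi(D)=\tfrac12\int_0^1c^2$. The ratio is therefore $\tfrac12\int c^2/(\int c)^2$, which tends to $\tfrac12$ only if $c$ becomes asymptotically constant; FGM has $c(v)=2\theta v(1-v)$. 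An exactly constant nonzero profile is excluded by $c(v)\le2\min\{v,1-v\}$, so you need a plateau whose boundary ramps occupy a vanishing fraction of $[0,1]$. Take $c(v)=\min\{c,2v,2(1-v)\}$ and verify that $D_c$ is an SI copula:
\begin{itemize}
\item $\partial_1D_c\in[0,1]$ follows from the bound on $c(v)$;
\item $2$-increasingness follows from $|c'|\le2$;
\item the boundary conditions follow from $c(0)=c(1)=0$.
\end{itemize}
Then compute $\xiu(D_c)/\rho_S(D_c)^2=\frac{2(3-2c)}{3(2-c)^2}\to\tfrac12$ as $c\downarrow0$.
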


\begin{proof}
(a)--(c) By \eqref{eq:geom} and Theorem~\ref{thm:polar}(c),(d),
\[
\xi(C)=6\|C-\Pi\|_1^2=6\|P_{\mathfrak K}(C-\Pi)\|_1^2+6\|P_{\mathfrak K^\circ}(C-\Pi)\|_1^2=\xiu(C)+6\,d_1(C,\CSI)^2,
\]
which is (a). Moreover, $\xiu(C)=\xi(C)$ if and only if $P_{\mathfrak K^\circ}(C-\Pi)=0$, i.e., $C-\Pi\in\mathfrak K$, and $\xiu(C)=0$ if and only if $P_{\mathfrak K}(C-\Pi)=0$, i.e., $C-\Pi\in\mathfrak K^\circ$. By Theorem~\ref{thm:polar}(b), this proves (b) and (c).

(d) $\xiu(M)=\xi(M)=1$. Conversely, $\xiu(C)=1$ forces $\xi(C)=1$ and $C\in\CSI$ by (a) and (b). Then $\partial_1C\in\{0,1\}$ almost everywhere. For almost every $v$, $\partial_1C(\cdot,v)$ is therefore a non-increasing $\{0,1\}$-valued function with integral $v$, i.e., it equals $\1_{[0,v)}$ almost everywhere. Hence $C(u,v)=\min(u,v)$ for almost every $v$, and $C=M$ by continuity.

(e) By Theorem~\ref{thm:proj}(e), $\Proj C_1\le\Proj C_2$. For fixed $v$, let $h_i$ be the right derivative of the concave function $\Proj C_i(\cdot,v)$; then $h_i=\partial_1\Proj C_i(\cdot,v)$ almost everywhere, and $h_i$ is non-increasing with values in $[0,1]$ (Lemma~\ref{lem:iso}(iii)), and $\int_0^uh_1=\Proj C_1(u,v)\le\Proj C_2(u,v)=\int_0^uh_2$ with equality at $u=1$. Lemma~\ref{lem:hlp} and integration over $v$ give $\int(\partial_1\Proj C_1)^2\le\int(\partial_1\Proj C_2)^2$, and \eqref{eq:xi-copula} yields the claim.

(f) Since $\sqrt{\xiu(C)}=\sqrt6\,\|P_{\mathfrak K}(C-\Pi)\|_1$ and metric projections onto closed convex sets are non-expansive, $\bigl|\sqrt{\xiu(C_1)}-\sqrt{\xiu(C_2)}\bigr|\le\sqrt6\,\|P_{\mathfrak K}(C_1-\Pi)-P_{\mathfrak K}(C_2-\Pi)\|_1\le\sqrt6\,d_1(C_1,C_2)$. For weak continuity, note that $\Proj C_n\to\Proj C$ uniformly by Theorem~\ref{thm:proj}(e). Fix $v$. The concave functions $\varphi_n=\Proj C_n(\cdot,v)$ converge uniformly to the concave function $\varphi=\Proj C(\cdot,v)$. At every $u\in(0,1)$ where $\varphi$ is differentiable, the right derivatives of $\varphi_n$ converge to $\varphi'(u)$ \citep[Theorem~24.5]{Rockafellar1970}. Hence $\partial_1\Proj C_n(u,v)\to\partial_1\Proj C(u,v)$ for almost every $u$ and every $v$, and thus for $\lambda^2$-almost every $(u,v)$ by Fubini's theorem. All these functions take values in $[0,1]$, so dominated convergence gives $\int(\partial_1\Proj C_n)^2\to\int(\partial_1\Proj C)^2$.

(g) For $D\in\C$, Fubini's theorem and $\int_0^1(\partial_1D(s,v)-v)\,ds=0$ give
\[
\int_0^1\bigl(D(u,v)-uv\bigr)\,du=\int_0^1\bigl(\partial_1D(s,v)-v\bigr)(1-s)\,ds=\int_0^1\bigl(\partial_1D(s,v)-v\bigr)\bigl(\tfrac12-s\bigr)\,ds .
\]
Integrating over $v$ and applying the Cauchy--Schwarz inequality, we obtain
\[
\rho_S(D)\le12\,d_1(D,\Pi)\,\Bigl(\int_0^1\!\!\int_0^1(\tfrac12-s)^2\,ds\,dv\Bigr)^{1/2}=12\sqrt{\xi(D)/6}\,\sqrt{1/12}=\sqrt{2\,\xi(D)} .
\]
Since $C\le\Proj C$, it follows that $\rho_S(C)\le\rho_S(\Proj C)\le\sqrt{2\,\xiu(C)}$. Now let $\rho_S(C)>0$ and suppose that equality holds. Then $\rho_S(C)=\rho_S(D)$ for $D=\Proj C$, and equality holds in the Cauchy--Schwarz inequality, i.e., $\partial_1D(s,v)-v=\lambda(\frac12-s)$ almost everywhere for some $\lambda\ge0$. Since $\partial_1D\ge0$, this gives $\lambda\le2v$ for almost every $v\in(0,1)$, hence $\lambda=0$ and $\rho_S(D)=12\lambda\int_0^1\!\int_0^1(\frac12-s)^2\,ds\,dv=0$, a contradiction. Thus the inequality is strict if $\rho_S(C)>0$. For optimality of $\frac12$, let $0<c\le1$, $c(v)=\min\{c,2v,2(1-v)\}$ and $D_c(u,v)=uv+\frac12c(v)\,u(1-u)$. Then $\partial_1D_c(u,v)=v+c(v)(\frac12-u)$ takes values in $[0,1]$, because $c(v)\le\min\{2v,2(1-v)\}$, and it is non-increasing in $u$. Since $c$ is Lipschitz continuous with $|c'|\le2$, we have $\partial_v\partial_1D_c(u,v)=1+c'(v)(\frac12-u)\ge0$, so $\partial_1D_c$ is non-decreasing in $v$. Together with $D_c(0,v)=0$, $D_c(1,v)=v$, $D_c(u,0)=0$ and $D_c(u,1)=u$, which hold because $c(0)=c(1)=0$, this shows that $D_c$ is an SI copula. A direct computation gives $\rho_S(D_c)=\int_0^1c(v)\,dv=c(1-\frac c2)$ and $\xiu(D_c)=\xi(D_c)=\frac12\int_0^1c(v)^2\,dv=\frac{c^2(3-2c)}6$, so that
\[
\frac{\xiu(D_c)}{\rho_S(D_c)^2}=\frac{2(3-2c)}{3(2-c)^2}\ \longrightarrow\ \frac12\qquad(c\downarrow0).
\]
\end{proof}

\begin{remark}
Properties (c)--(f) distinguish $\xiu$ from $\xi$. The comonotonicity and countermonotonicity copulas satisfy $\xi(M)=\xi(W)=1$, whereas $\xiu(M)=1$ and $\xiu(W)=0$. Weak continuity fails for $\xi$ \citep{BucherDette2025}: Example~\ref{ex:saw} provides copulas $C_k\to\Pi$ uniformly with $\xi(C_k)=1$ for all $k$, whereas $\xiu(C_k)=1/k\to0=\xiu(\Pi)$. In (g), the constant $\frac12$ is optimal but not attained. For comparison, the Farlie--Gumbel--Morgenstern copulas with $\theta>0$ give the ratio $\xiu/\rho_S^2=\frac35$. The exact region of $(\xi,\rho_S)$ over all copulas has been determined by \citet{AnsariRockel2026}.
\end{remark}

Since $\partial_1C^-(u,v)=\partial_1C(1-u,v)$, we have $\xi(C^-)=\xi(C)$. Moreover, $C^-\le\Pi$ if and only if $C\ge\Pi$, $C^-=M$ if and only if $C=W$, and $C\mapsto C^-$ reverses the concordance order and preserves uniform convergence. Hence Theorem~\ref{thm:xiu} applied to $C$ and $C^-$ gives the following result.

\begin{corollary}\label{cor:max}
The functional $\kappa(C)=\max\{\xiu(C),\xid(C)\}$ satisfies $0\le\kappa\le\xi$; moreover, $\kappa(C)=0$ if and only if $C=\Pi$, and $\kappa(C)=1$ if and only if $C\in\{M,W\}$. It is continuous with respect to uniform convergence. Furthermore, $\xid$ is decreasing in the concordance order, and $\xid(C)=0$ if and only if $C$ is PQD.
\end{corollary}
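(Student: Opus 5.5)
The plan is to apply Theorem~\ref{thm:xiu} to both $C$ and $C^-$ and to use the elementary properties of the map $C\mapsto C^-$ listed just before the corollary. First I would check those properties. Since $C^-(u,v)=v-C(1-u,v)$, we get $\partial_1C^-(u,v)=\partial_1C(1-u,v)$, and the substitution $u\mapsto1-u$ in \eqref{eq:xi-copula} gives $\xi(C^-)=\xi(C)$. Next,
\[
C^-(u,v)-uv=-\bigl(C(1-u,v)-(1-u)v\bigr),
\]
so $C^-\le\Pi$ if and only if $C\ge\Pi$. The map is an involution and order-reversing, because $C_1\le C_2$ implies $C_1^-\ge C_2^-$. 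It also preserves sup-norm distances, so it preserves uniform convergence. Finally, $M^-=W$, and since the map is an involution, $C^-=M$ if and only if $C=W$.

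With these facts, the bounds follow from Theorem~\ref{thm:xiu}(a): $0\le\xiu(C)\le\xi(C)$ and $0\le\xid(C)=\xiu(C^-)\le\xi(C^-)=\xi(C)$, hence $0\le\kappa\le\xi$. The statements about $\xid$ come from Theorem~\ref{thm:xiu}(c) and (e) applied to $C^-$. Indeed, $\xid(C)=0$ if and only if $C^-\le\Pi$, which holds if and only if $C$ is PQD. Also, if $C_1\le C_2$ then $C_2^-\le C_1^-$, so $\xid(C_2)\le\xid(C_1)$.

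For the zero set of $\kappa$: $\kappa(C)=0$ if and only if $\xiu(C)=0$ and $\xid(C)=0$, that is, $C\le\Pi$ and $C\ge\Pi$, which means $C=\Pi$. Conversely, $\kappa(\Pi)=0$ because $\Pi^-=\Pi$ and $\Pi$ is SI, so $\Proj\Pi=\Pi$. For the value one: $\kappa(C)=1$ if and only if $\xiu(C)=1$ or $\xiu(C^-)=1$. By Theorem~\ref{thm:xiu}(d), this holds if and only if $C=M$ or $C^-=M$, which is the same as $C\in\{M,W\}$.

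For continuity, let $C_n\to C$ uniformly. Then $\xiu(C_n)\to\xiu(C)$ by Theorem~\ref{thm:xiu}(f). Since $C_n^-\to C^-$ uniformly, the same theorem gives $\xid(C_n)\to\xid(C)$. The maximum of two continuous functionals is continuous, so $\kappa(C_n)\to\kappa(C)$.

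No step is a real obstacle. The only point needing care is to verify the properties of $C\mapsto C^-$: that it preserves $\xi$, reverses the concordance order and preserves uniform convergence. Each of these is a one-line computation.
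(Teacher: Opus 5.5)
Your proposal is correct and follows essentially the same route as the paper: apply Theorem~\ref{thm:xiu} to both $C$ and $C^-$, using that $C\mapsto C^-$ is an order-reversing involution that preserves $\xi$ and uniform convergence and exchanges NQD with PQD and $M$ with $W$. Your explicit checks of these properties, such as $C^-(u,v)-uv=-\bigl(C(1-u,v)-(1-u)v\bigr)$ and $M^-=W$, fill in the one-line remarks the paper makes just before the corollary.
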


The two directed parts do not exhaust $\xi$, but the polarity of Theorem~\ref{thm:polar} shows that their sum never exceeds $\xi$. Let $R$ be the map $(RA)(u,v)=A(1,v)-A(1-u,v)$ on $\mathcal H$. Since $\partial_1(RA)(u,v)=\partial_1A(1-u,v)$, $R$ is a linear isometric involution, $R(C-\Pi)=C^--\Pi$, and $R\mathfrak K=\{A\in\mathcal H:A(\cdot,v)\text{ convex for almost every }v\}$.

\begin{proposition}\label{prop:three}
For every $C\in\C$,
\[
\xi(C)=\xiu(C)+\xid(C)+\xi^\circ(C)\qquad\text{with}\qquad \xi^\circ(C)\ge0 .
\]
More precisely, with $x=C-\Pi$, $p_1=P_{\mathfrak K}x$ and $p_2=P_{R\mathfrak K}x$,
\[
\xi^\circ(C)=6\,\|x-p_1-p_2\|_1^2+12\,|\ip{p_1}{p_2}_1| .
\]
In particular, $\xi^\circ(C)=0$ whenever every section $C(\cdot,v)$ is concave or convex; this includes all SI and all stochastically decreasing copulas.
\end{proposition}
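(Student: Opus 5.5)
The plan is to read everything off the Hilbert space $(\mathcal H,\ip{\cdot}{\cdot}_1)$, with $x=C-\Pi$, and to \emph{define} $\xi^\circ(C):=\xi(C)-\xiu(C)-\xid(C)$. The claim then reduces to an identity for the three norms involved plus a sign condition on $\ip{p_1}{p_2}_1$.

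\emph{Step 1: identify the three terms.} By \eqref{eq:geom} and Theorem~\ref{thm:polar}(c),(d) we have $\xi(C)=6\|x\|_1^2$ and $\xiu(C)=6\|P_{\mathfrak K}x\|_1^2=6\|p_1\|_1^2$. For $\xid$, apply the same result to $C^-$ and use $R(C-\Pi)=C^--\Pi$:
\[
\xid(C)=6\|P_{\mathfrak K}(Rx)\|_1^2 .
\]
Since $R$ is a linear isometric involution, $P_{R\mathfrak K}x=R\,P_{\mathfrak K}(Rx)$. Hence $\xid(C)=6\|p_2\|_1^2$, and therefore
\[
\xi^\circ(C)=6\bigl(\|x\|_1^2-\|p_1\|_1^2-\|p_2\|_1^2\bigr).
\]

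\emph{Step 2: algebraic identity.} Expand $\|x\|_1^2=\|(x-p_1-p_2)+p_1+p_2\|_1^2$. The Moreau conditions give $\ip{x-p_1}{p_1}_1=0$ and $\ip{x-p_2}{p_2}_1=0$. From these, $\ip{x-p_1-p_2}{p_1}_1=-\ip{p_1}{p_2}_1$, and likewise with $p_2$. Collecting terms yields
\[
\|x\|_1^2-\|p_1\|_1^2-\|p_2\|_1^2=\|x-p_1-p_2\|_1^2-2\ip{p_1}{p_2}_1 .
\]

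\emph{Step 3: the sign $\ip{p_1}{p_2}_1\le 0$.} This is the key point, and it comes from the polarity in Theorem~\ref{thm:polar}. We know $p_1=\Proj C-\Pi\in\mathfrak K$. For $p_2$, note that any $A\in\mathcal H$ with $A(1,v)=0$ satisfies $(RA)(u,v)=-A(1-u,v)$. Thus
\[
p_2(u,v)=-(\Proj C^--\Pi)(1-u,v).
\]
This function is $\le0$ by Theorem~\ref{thm:proj}(b), and it vanishes at $u=1$. By Theorem~\ref{thm:polar}(a), $p_2\in\mathfrak K^\circ$, so $\ip{p_1}{p_2}_1\le0$. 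Consequently $-2\ip{p_1}{p_2}_1=2|\ip{p_1}{p_2}_1|$, which gives both the formula for $\xi^\circ$ and $\xi^\circ\ge0$.

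\emph{Step 4: the vanishing statement.} Both projections act section by section. The proof of Theorem~\ref{thm:polar}(c) shows that $P_{\mathfrak K}$ applies $\I$ to each section, and $P_{R\mathfrak K}$ is its conjugate under $R$. So it suffices to check, for almost every fixed $v$, that $x_v-p_{1,v}-p_{2,v}=0$ and $\ip{p_{1,v}}{p_{2,v}}=0$.

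\emph{Concave sections.} If $C(\cdot,v)$ is concave, then $x_v=C(\cdot,v)-uv$ lies in the sectional cone $\K$-image, so $p_{1,v}=x_v$. Moreover $x_v\ge0$ and $x_v(1)=0$, so $(Rx)_v=-x_v(1-\cdot)$ is $\le0$ and vanishes at $1$. It therefore lies in the sectional polar cone (the first part of the proof of Theorem~\ref{thm:polar}(a)), so its projection onto the cone is $0$, i.e.\ $p_{2,v}=0$.

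\emph{Convex sections.} This case is symmetric: $p_{2,v}=x_v$ and $p_{1,v}=0$, because $x_v\le0$ with $x_v(1)=0$ lies in the sectional polar of $\K$.

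In either case both expressions vanish for almost every $v$, and integrating over $v$ gives $\xi^\circ(C)=0$.

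The main obstacle is Step 3. The two projections land in different cones, and one has to notice that the projection onto the convex-section cone in fact lies in the NQD polar cone $\mathfrak K^\circ$; once this is seen, everything else is bookkeeping.
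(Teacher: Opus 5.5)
Your proof is correct and follows essentially the paper's argument: the same identification $\xiu=6\|p_1\|_1^2$, $\xid=6\|p_2\|_1^2$ via $P_{R\mathfrak K}=RP_{\mathfrak K}R$, the same Moreau-based expansion, and the sign of $\ip{p_1}{p_2}_1$ from Theorem~\ref{thm:polar}(a). The differences are minor. You show $p_2\in\mathfrak K^\circ$ where the paper shows the mirror statement $p_1\in(R\mathfrak K)^\circ$. In the vanishing case, you use that elements of the sectional polar cone project to zero, where the paper uses that the least concave majorant of a convex function is its chord.
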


\begin{proof}
Since $R$ is a linear isometric involution, $P_{R\mathfrak K}=RP_{\mathfrak K}R$, and hence $\xid(C)=\xiu(C^-)=6\|P_{\mathfrak K}Rx\|_1^2=6\|p_2\|_1^2$; likewise $\xiu(C)=6\|p_1\|_1^2$. By Theorem~\ref{thm:proj}(b), $p_1=\Proj C-\Pi\ge0$, and $p_1(1,v)=0$. By Theorem~\ref{thm:polar}(a), $(R\mathfrak K)^\circ=R(\mathfrak K^\circ)=\{B\in\mathcal H:B(\cdot,v)\ge0,\ B(1,v)=0\text{ for almost every }v\}$, so $p_1\in(R\mathfrak K)^\circ$ and therefore $\ip{p_1}{p_2}_1\le0$. Moreau's theorem gives $\ip{x}{p_i}_1=\|p_i\|_1^2$, hence
\[
\|x-p_1-p_2\|_1^2=\|x\|_1^2-\|p_1\|_1^2-\|p_2\|_1^2+2\ip{p_1}{p_2}_1 .
\]
Multiplying by $6$ and using $\xi(C)=6\|x\|_1^2$ yields the formula for $\xi^\circ(C)$. Both projections act section by section. If $C(\cdot,v)$ is concave, then $\partial_1x(\cdot,v)$ is non-increasing with mean zero, so $p_1(\cdot,v)=x(\cdot,v)$; moreover, the projection of $\partial_1x(\cdot,v)$ onto the non-decreasing functions is constant, equal to its mean zero, by Lemma~\ref{lem:iso}(i) applied to the reflected function (the least concave majorant of a convex function is its chord), so $p_2(\cdot,v)=0$. The convex case is symmetric. In both cases the integrands of $\|x-p_1-p_2\|_1^2$ and $\ip{p_1}{p_2}_1$ vanish at $v$.
\end{proof}

The remainder $\xi^\circ$ is a natural candidate for the genuinely non-monotone part of $\xi$, i.e., the part explained neither by an increasing nor by a decreasing relationship. Unlike $\xiu$ and $\xid$, however, it inherits the lack of weak continuity from $\xi$: for the sawtooth shuffles of Example~\ref{ex:saw}, $\xi^\circ(C_k)=1-1/k\to1$, whereas $\xi^\circ(\Pi)=0$. For the copula of $(U,|2U-1|)$ in Example~\ref{ex:tent}, $\xi^\circ=17-24\log2\approx0.364$, and for $\frac12M+\frac12W$, $\xi^\circ=\frac{17}4-6\log2\approx0.091$.

\section{Examples}\label{sec:examples}

\begin{example}[Farlie--Gumbel--Morgenstern]
For $C_\theta(u,v)=uv\bigl(1+\theta(1-u)(1-v)\bigr)$, $\theta\in[-1,1]$, we have $\partial_1C_\theta(u,v)=v+\theta v(1-v)(1-2u)$. Hence $C_\theta$ is SI for $\theta\ge0$ and NQD for $\theta\le0$, and
\[
\xi(C_\theta)=\frac{\theta^2}{15},\qquad \xiu(C_\theta)=\frac{(\theta_+)^2}{15},\qquad \xid(C_\theta)=\frac{(\theta_-)^2}{15}.
\]
\end{example}

\begin{example}[Gaussian copulas]
Let $C_\rho$ be the Gaussian copula with correlation $\rho\in(-1,1)$. Then $\int_0^1(\partial_1C_\rho(u,v))^2\,du=C_{\rho^2}(v,v)$, since, given $U$, two conditionally independent copies of $V$ have normal scores with correlation $\rho^2$. Moreover, if $Z_1,Z_2,Z_3$ are standard normal, $(Z_1,Z_2)$ has correlation $r$ and $Z_3$ is independent of $(Z_1,Z_2)$, then
\[
\int_0^1C_r(t,t)\,dt=\Prob(Z_1\le Z_3,\,Z_2\le Z_3)=\frac14+\frac{1}{2\pi}\arcsin\frac{1+r}{2},
\]
because $(Z_1-Z_3,Z_2-Z_3)$ is centred normal with correlation $(1+r)/2$. Therefore $\xi(C_\rho)=\frac3\pi\arcsin\frac{1+\rho^2}{2}-\frac12$. Since $C_\rho$ is SI for $\rho\ge0$ and NQD for $\rho\le0$ \citep{Lehmann1966},
\[
\xiu(C_\rho)=\Bigl(\frac3\pi\arcsin\frac{1+\rho^2}{2}-\frac12\Bigr)\1\{\rho\ge0\}.
\]
\end{example}

\begin{example}[Mixtures of the Fr\'echet--Hoeffding bounds]\label{ex:mix}
Let $C_\alpha=\alpha M+(1-\alpha)W$, $\alpha\in[0,1]$. By \citet[Remark~19]{SiburgStoimenov2008}, $\xi(C_\alpha)=\omega(C_\alpha)^2=1-3\alpha(1-\alpha)$, which is not monotone in $\alpha$, although $C_\alpha$ increases in the concordance order. For $v\le\frac12$, the section $C_\alpha(\cdot,v)$ has slope $\alpha$ on $[0,v]$, slope $0$ on $[v,1-v]$ and slope $1-\alpha$ on $[1-v,1]$. Its least concave majorant has slopes $\alpha$ on $[0,v]$ and $v(1-\alpha)/(1-v)$ on $[v,1]$ if $v\le\alpha$, and it is linear otherwise. Hence $\int_0^1(\partial_1\Proj C_\alpha-v)^2\,du=(\alpha-v)_+^2\,v/(1-v)$. For $v>\frac12$, an analogous computation shows that this integral equals $(\alpha-(1-v))_+^2\,(1-v)/v$, i.e., the same expression with $v$ replaced by $1-v$. Therefore
\[
\xiu(C_\alpha)=12\int_0^{\min(\alpha,1/2)}(\alpha-v)^2\frac{v}{1-v}\,dv,
\]
which evaluates to
\[
\xiu(C_\alpha)=
\begin{cases}
-4\alpha^3+18\alpha^2-12\alpha-12(1-\alpha)^2\log(1-\alpha), & \alpha\le\frac12,\\[2pt]
-6\alpha^2+15\alpha-8+12(1-\alpha)^2\log2, & \alpha\ge\frac12 .
\end{cases}
\]
By symmetry, $\xid(C_\alpha)=\xiu(C_{1-\alpha})$. Figure~\ref{fig:mix} shows the three functions. In accordance with Theorem~\ref{thm:xiu}(e), $\xiu(C_\alpha)$ is increasing in $\alpha$.
\end{example}

\begin{figure}[t]
\centering
\includegraphics[width=0.62\textwidth]{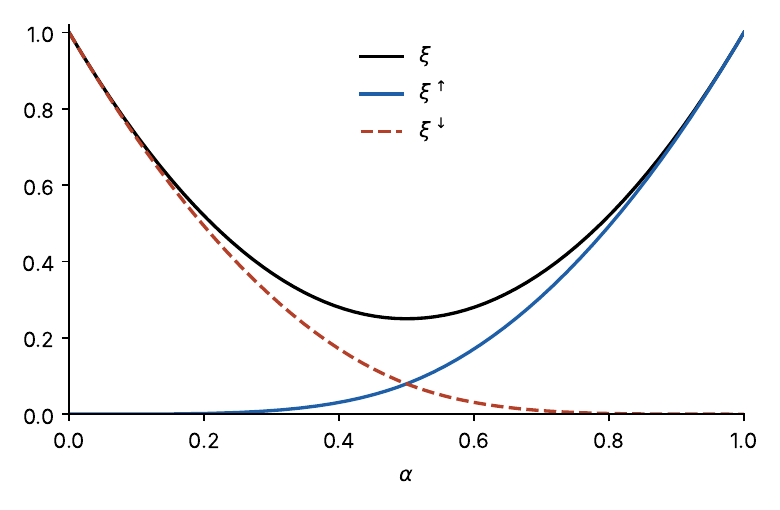}
\caption{$\xi$, $\xiu$ and $\xid$ for the mixtures $\alpha M+(1-\alpha)W$ of Example~\ref{ex:mix}.}
\label{fig:mix}
\end{figure}

\begin{example}[Sawtooth shuffles]\label{ex:saw}
For $k\in\mathbb N$ let $C_k$ be the copula of $(U,kU\bmod1)$ with $U$ uniformly distributed. Then $\xi(C_k)=1$, while $C_k\to\Pi$ uniformly. For fixed $v$, $\partial_1C_k(\cdot,v)$ is the indicator function of $\bigcup_{i=0}^{k-1}[\tfrac ik,\tfrac{i+v}k]$. The least concave majorant of $C_k(\cdot,v)$ passes through $(0,0)$, the points $(\frac{i+v}{k},\frac{(i+1)v}{k})$, $i=0,\dots,k-1$, and $(1,v)$. Its derivative equals $1$ on $[0,\frac vk)$, $v$ on $[\frac vk,\frac{k-1+v}{k})$ and $0$ on $[\frac{k-1+v}k,1]$. Hence $\int_0^1(\partial_1\Proj C_k-v)^2\,du=v(1-v)/k$ and
\[
\xiu(C_k)=\frac1k\longrightarrow0=\xiu(\Pi).
\]
Moreover, $C_k(i/k,v)=iv/k$ and $C_k(\cdot,v)$ is piecewise linear with slopes in $\{0,1\}$, so $C_k\ge\Pi$ and $\xid(C_k)=0$ by Corollary~\ref{cor:max}.
\end{example}

\begin{example}[A non-monotone functional relationship]\label{ex:tent}
Let $C$ be the copula of $(U,|2U-1|)$, i.e., of $(X,(X-\frac12)^2)$ for uniformly distributed $X$. Here $\xi(C)=1$. The section $C(\cdot,v)$ vanishes on $[0,\frac{1-v}2]$, has slope $1$ on $[\frac{1-v}2,\frac{1+v}2]$ and is constant afterwards. Its least concave majorant has slope $2v/(1+v)$ on $[0,\frac{1+v}2]$ and slope $0$ afterwards. This gives $\int_0^1(\partial_1\Proj C-v)^2\,du=v^2(1-v)/(1+v)$ and
\[
\xiu(C)=6\int_0^1\frac{v^2(1-v)}{1+v}\,dv=12\log2-8\approx0.318 .
\]
By the symmetry $u\mapsto1-u$, also $\xid(C)=12\log2-8$. Thus a U-shaped relationship, which is neither PQD nor NQD, has non-trivial increasing and decreasing parts. Here $\rho_S(C)=0$, so concordance measures do not detect the dependence at all, whereas $\xi(C)=1$ does not reveal its shape. The value of $\xiu(C)$ does not mean that the relationship is ``partly monotone'' as a function. It is the share of the variation of the indicators $\1\{Y\ge t\}$ that non-decreasing functions of $X$ can explain (Remark~\ref{rem:anova}): the best non-decreasing predictor of $\1\{|2X-1|\ge t\}$ reproduces the increasing branch on $[\frac{1+t}2,1]$ exactly and replaces the decreasing branch on $[0,\frac{1+t}2]$ by the constant $\frac{1-t}{1+t}$. The decreasing branch contributes $\xid(C)$ in the same way, and the remaining share $\xi^\circ(C)=17-24\log2\approx0.364$ of Proposition~\ref{prop:three} requires a genuinely non-monotone predictor. For the transposed copula, $\xi(C^\top)=\frac14$ and $\xiu(C^\top)=\frac18$, so $\omega(C)^2=\frac58$ and $\omu(C)^2=\frac12(12\log2-8+\frac18)\approx0.221$; see Section~\ref{sec:sym}.
\end{example}

Table~\ref{tab:values} collects numerical values of $\xi$, $\xiu$ and $\xid$ for these examples.

\begin{table}[t]
\centering
\begin{tabular}{lccc}
\toprule
Copula & $\xi$ & $\xiu$ & $\xid$\\
\midrule
FGM, $\theta=0.8$ & $0.0427$ & $0.0427$ & $0$\\
Gaussian, $\rho=0.7$ & $0.3027$ & $0.3027$ & $0$\\
Gaussian, $\rho=-0.6$ & $0.2141$ & $0$ & $0.2141$\\
$\frac34M+\frac14W$ & $0.4375$ & $0.3949$ & $0.0044$\\
$\frac12M+\frac12W$ & $0.25$ & $0.0794$ & $0.0794$\\
Sawtooth, $k=8$ & $1$ & $0.125$ & $0$\\
$(U,|2U-1|)$ & $1$ & $0.3178$ & $0.3178$\\
\bottomrule
\end{tabular}
\caption{Values of $\xi$, $\xiu$ and $\xid$ for the examples of Section~\ref{sec:examples}.}
\label{tab:values}
\end{table}

\section{The symmetric version: mutual monotone dependence}\label{sec:sym}

By \eqref{eq:geom}, the full Sobolev geometry of \citet{SiburgStoimenov2008} is the orthogonal sum of the two directed geometries. This yields a symmetric counterpart of the decomposition in Theorem~\ref{thm:xiu}. Let $\CSI^\top=\{D^\top:D\in\CSI\}$ be the set of copulas for which $U$ is stochastically increasing in $V$; note that $d_2(C,\CSI^\top)=d_1(C^\top,\CSI)$.

\begin{definition}
For $C\in\C$ let $\omu(C)=\bigl(\tfrac12\,[\,\xiu(C)+\xiu(C^\top)\,]\bigr)^{1/2}$.
\end{definition}

\begin{theorem}\label{thm:sym}
For all $C,C_1,C_2,C_n\in\C$ the following hold.
\begin{enumerate}
\item[(a)] $\omega(C)^2=\omu(C)^2+3\bigl[d_1(C,\CSI)^2+d_2(C,\CSI^\top)^2\bigr]$; equivalently, in terms of the Sobolev norm,
\[
\|C-\Pi\|^2=\tfrac13\,\omu(C)^2+d_1(C,\CSI)^2+d_2(C,\CSI^\top)^2 .
\]
\item[(b)] $\omu(C)=\omega(C)$ if and only if $C\in\CSI\cap\CSI^\top$.
\item[(c)] $\omu(C)=0$ if and only if $C$ is NQD.
\item[(d)] $\omu(C)=1$ if and only if $C=M$.
\item[(e)] If $C_1\le C_2$, then $\omu(C_1)\le\omu(C_2)$.
\item[(f)] If $C_n\to C$ uniformly, then $\omu(C_n)\to\omu(C)$.
\end{enumerate}
\end{theorem}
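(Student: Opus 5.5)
Each item will be reduced to the corresponding part of Theorem~\ref{thm:xiu}, applied once to $C$ and once to $C^\top$, and the two results combined using \eqref{eq:geom}. Transposition is compatible with everything involved. It preserves the concordance order, it preserves uniform convergence, and it maps $\CSI$ onto $\CSI^\top$, with $d_2(C,\CSI^\top)=d_1(C^\top,\CSI)$. Moreover $\xiu(C)\ge0$ and $\xiu(C^\top)\ge0$, so $\omu(C)=0$ forces both terms to vanish.

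(a) Apply Theorem~\ref{thm:xiu}(a) to $C$ and to $C^\top$, add the two identities and divide by $2$. With \eqref{eq:geom} this gives
\[
\omega(C)^2=\tfrac12\bigl(\xi(C)+\xi(C^\top)\bigr)=\omu(C)^2+3\bigl[d_1(C,\CSI)^2+d_1(C^\top,\CSI)^2\bigr],
\]
and the second bracketed term equals $d_2(C,\CSI^\top)^2$. Dividing by $3$ and using $\omega^2=3\|C-\Pi\|^2$ yields the Sobolev-norm form.

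(b) By (a), $\omu(C)=\omega(C)$ holds exactly when both distances vanish. The sets involved are closed, since $\Proj C$ attains the infimum. So this means $C\in\CSI$ and $C^\top\in\CSI$, i.e.\ $C\in\CSI\cap\CSI^\top$.

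(c) $\omu(C)=0$ holds iff $\xiu(C)=\xiu(C^\top)=0$. By Theorem~\ref{thm:xiu}(c) this means $C\le\Pi$ and $C^\top\le\Pi$. These two conditions are equivalent, because $\Pi^\top=\Pi$, so the condition reduces to $C$ being NQD.

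(d) Since $\xiu\le1$, the equality $\omu(C)=1$ forces $\xiu(C)=\xiu(C^\top)=1$. Theorem~\ref{thm:xiu}(d) then gives $C=M$. Conversely $M^\top=M$, so $\omu(M)=1$.

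(e) If $C_1\le C_2$, then $C_1^\top\le C_2^\top$. Apply Theorem~\ref{thm:xiu}(e) twice and take the monotone square root.

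(f) If $C_n\to C$ uniformly, then $C_n^\top\to C^\top$ uniformly. Theorem~\ref{thm:xiu}(f) gives convergence of both terms, and continuity of the square root finishes the argument.

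No step is a real obstacle. The only points needing care are in (c) and (d): the NQD condition is invariant under transposition, and the bound $\xiu\le1$ is what forces both summands to attain the extreme value.
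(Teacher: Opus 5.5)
Your proposal is correct and follows the paper's proof: each item is obtained from Theorem~\ref{thm:xiu} applied to $C$ and $C^\top$, combined via \eqref{eq:geom}, the identity $d_2(C,\CSI^\top)=d_1(C^\top,\CSI)$, and the invariance of the concordance order, uniform convergence, NQD and $M$ under transposition. One phrasing in (b) could be sharper: the relevant fact is that $d_1(C,\CSI)=0$ forces $d_1(C,\Proj C)=0$, i.e.\ $C=\Proj C\in\CSI$; equivalently, you can invoke Theorem~\ref{thm:xiu}(b) for $C$ and for $C^\top$.
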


\begin{proof}
Transposition preserves the concordance order, uniform convergence, NQD and $M$, and $\Proj(C^\top)$ is the $d_1$-projection of $C^\top$ onto $\CSI$. Hence all statements follow from Theorem~\ref{thm:xiu} applied to $C$ and $C^\top$, together with \eqref{eq:geom}. For (c), note that $C\le\Pi$ if and only if $C^\top\le\Pi$.
\end{proof}

\begin{remark}
In terms of the original problem of \citet{SiburgStoimenov2010}, $\omega$ measures how close $C$ is to an invertible copula, i.e., to a bijective functional relationship, while $\omu$ measures the part of this closeness that is explained by an increasing relationship in both directions. Theorem~\ref{thm:sym} combines two projections, $\Proj C$ and $(\Proj C^\top)^\top$, which in general are different copulas. The metric projection of $C$ onto $\CSI$, or onto $\CSI\cap\CSI^\top$, with respect to the \emph{full} Sobolev distance $d$ does not act section by section, because a change of $\partial_1C$ also changes $\partial_2C$. It leads to an obstacle-type variational problem that we do not pursue here.
\end{remark}

\section{Arbitrary distributions and estimation}\label{sec:general}

\subsection{Arbitrary distributions}

Let $(X,Y)$ be a random vector on $(\Omega,\mathcal F,\Prob)$, with $Y$ non-degenerate with distribution $\mu$. Put $f_t=\1\{Y\ge t\}$, $t\in\R$, and let $\K_X$ be the closure in $L^2(\Prob)$ of
\[
\bigl\{k(X):\ k\colon\R\to\R\ \text{non-decreasing and bounded}\bigr\}.
\]
This is a closed convex cone containing all constants, and $\K_X\subset L^2(\sigma(X))$. We define $\K_X$ as a closure because the set of all $k(X)\in L^2(\Prob)$ with $k\colon\R\to\R$ non-decreasing need not be closed: if $X$ is uniformly distributed on $(0,1)$, then $\log X$ is the $L^2$-limit of $\log(\max\{X,1/m\})$, but $\log X$ does not coincide almost surely with $k(X)$ for any non-decreasing $k\colon\R\to\R$, because such a $k$ would have to satisfy $k(0)\le\inf_{x\in(0,1)}\log x=-\infty$. For bounded elements, however, no such problem occurs.

\begin{lemma}\label{lem:KX}
Let $Z\in\K_X$ with $a\le Z\le b$ almost surely for some $a<b$. Then $Z=k(X)$ almost surely for a non-decreasing $k\colon\R\to[a,b]$.
\end{lemma}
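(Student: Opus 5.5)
By the definition of $\K_X$ there are bounded non-decreasing $k_n\colon\R\to\R$ with $k_n(X)\to Z$ in $L^2(\Prob)$. First I would clip: replace $k_n$ by $\tilde k_n=(k_n\wedge b)\vee a$. Clipping is $1$-Lipschitz and $a\le Z\le b$, so $|\tilde k_n(X)-Z|\le|k_n(X)-Z|$ pointwise and still $\tilde k_n(X)\to Z$ in $L^2$. Passing to a subsequence, I may also assume $\tilde k_n(X)\to Z$ almost surely. Each $\tilde k_n$ is non-decreasing with values in $[a,b]$.

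The key step is Helly's selection theorem. The $\tilde k_n$ are uniformly bounded and monotone, so a further subsequence converges pointwise at every $x\in\R$ to some $k\colon\R\to[a,b]$. The pointwise limit of non-decreasing functions is non-decreasing. Then $\tilde k_n(X(\omega))\to k(X(\omega))$ for every $\omega$. Since also $\tilde k_n(X)\to Z$ almost surely along this subsequence, $Z=k(X)$ almost surely.

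Measurability of $k$ is automatic, since $k$ is monotone. Helly's theorem is usually stated for functions on $\R$, so no problem arises at points outside the support of $X$.

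The only delicate point is ensuring pointwise convergence everywhere, not just on a dense set. The standard form of Helly's theorem gives everywhere convergence via a diagonal argument over the rationals followed by the countably many jump points of the limit. If I prefer to avoid this, an alternative is to work on the rationals only. Take the diagonal subsequence converging on $\mathbb Q$ to $k_0$, and set $k(x)=\inf_{q>x,\,q\in\mathbb Q}k_0(q)$. Then $\tilde k_n(x)\to k(x)$ except at the countably many discontinuity points of $k$. These points can be handled by a further diagonal extraction, which is exactly the everywhere version of Helly's theorem. I therefore expect no real obstacle beyond citing Helly's theorem correctly.
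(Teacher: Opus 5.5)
Your proposal is correct and follows essentially the same route as the paper: you clip to $[a,b]$ via the $1$-Lipschitz map, apply Helly's selection theorem in its everywhere-convergent form (diagonal extraction on the rationals, then on the countably many discontinuity points of the limit), and identify the pointwise limit $k(X)$ with the $L^2$-limit $Z$. The only cosmetic difference is that you use the right-continuous version $\inf_{q>x}$ of the limit on the rationals, where the paper uses the left-continuous $\sup_{q<x}$, which is immaterial.
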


\begin{proof}
Let $k_m(X)\to Z$ in $L^2(\Prob)$ with bounded non-decreasing $k_m$. The map $x\mapsto(x\vee a)\wedge b$ is $1$-Lipschitz and fixes $Z$, so we may assume that $k_m$ takes values in $[a,b]$. By a diagonal argument, a subsequence, still denoted by $(k_m)$, converges at every rational point. The function $\tilde k(x)=\sup_{q<x,\,q\in\mathbb Q}\lim_mk_m(q)$ is non-decreasing, and by monotonicity $k_m(x)\to\tilde k(x)$ at every continuity point $x$ of $\tilde k$. Since $\tilde k$ has at most countably many discontinuities, a further diagonal argument yields a subsequence that converges at every point of $\R$ (this is Helly's selection theorem in the form giving convergence everywhere). Its limit $k\colon\R\to[a,b]$ is non-decreasing. Along this subsequence, $k_m(X)\to k(X)$ pointwise and $k_m(X)\to Z$ in $L^2(\Prob)$, hence $Z=k(X)$ almost surely.
\end{proof}

Let $\Pi^\uparrow_X$ denote the metric projection onto $\K_X$. Since $\K_X$ is contained in the closed subspace $L^2(\sigma(X))$ and $\E[\,\cdot\mid X]$ is the orthogonal projection onto this subspace, we have $\Pi^\uparrow_Xf=\Pi^\uparrow_X\E[f\mid X]$. Since $\K_X$ is a closed convex cone containing $\pm1$, the characterisation \eqref{eq:moreau} and the arguments in the proof of Lemma~\ref{lem:iso}(ii) apply verbatim and give $\E\,\Pi^\uparrow_Xf=\E f$ and
\begin{equation}\label{eq:pyth-general}
\Var\bigl(\E[f\mid X]\bigr)=\Var\bigl(\Pi^\uparrow_Xf\bigr)+\bigl\|\E[f\mid X]-\Pi^\uparrow_Xf\bigr\|_{L^2}^2 .
\end{equation}
The map $t\mapsto f_t\in L^2$ is monotone and left-continuous, and $\Pi^\uparrow_X$ is continuous, so all integrands below are measurable.

\begin{definition}\label{def:general}
The \emph{monotone part} of Chatterjee's rank correlation is
\[
\xiu(Y|X)=\frac{\int\Var\bigl(\Pi^\uparrow_Xf_t\bigr)\,d\mu(t)}{\int\Var(f_t)\,d\mu(t)},
\]
and $\xid(Y|X)=\xiu(Y|-X)$. We say that $Y$ is SI in $X$ if for every $t\in\R$ there is a non-decreasing $k_t\colon\R\to[0,1]$ with $\Prob(Y\ge t\mid X)=k_t(X)$ almost surely.
\end{definition}

If $X$ takes only finitely many values, $\Pi^\uparrow_Xf_t$ is the weighted isotonic regression of $\Prob(Y\ge t\mid X=x)$ on the atoms of $X$, with weights $\Prob(X=x)$. The following two lemmas handle the passage from $\mu$-almost every $t$ to every $t$, and from conditional statements to functional ones.

\begin{lemma}\label{lem:allt}
Let $\Phi\colon L^2(\Prob)\to L^2(\Prob)$ be continuous and $S\subset L^2(\Prob)$ closed with $\Phi(c)\in S$ for all constants $c\in\{0,1\}$. If $\Phi(f_t)\in S$ for $\mu$-almost every $t$, then $\Phi(f_t)\in S$ for every $t\in\R$.
\end{lemma}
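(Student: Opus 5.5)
The plan is to use the left-continuity and monotonicity of $t\mapsto f_t$ in $L^2(\Prob)$, together with the structure of $\mu$, to approximate each fixed $t$ by points from the $\mu$-full set where $\Phi(f_t)\in S$. Since $\Phi$ is continuous and $S$ is closed, it suffices to show that for every $t\in\R$, $f_t$ is an $L^2$-limit of $f_{t_n}$ with $t_n$ in the good set $T$ (where $\mu(\R\setminus T)=0$), or else $f_t$ is a constant in $\{0,1\}$ almost surely.

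Fix $t$ and distinguish cases. If $\mu(\{t\})>0$, then $t\in T$ automatically, since a $\mu$-null set cannot contain an atom. If $\Prob(Y<t)=0$, then $f_t=1$ almost surely, and $\Phi(1)\in S$ by hypothesis. Otherwise $\mu((-\infty,t))>0$. Let $s=\sup(\operatorname{supp}\mu\cap(-\infty,t))$.

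If $s=t$, every interval $(t-\varepsilon,t)$ has positive $\mu$-mass, so it meets $T$. We pick $t_n\uparrow t$ in $T$; then $\|f_{t_n}-f_t\|_2^2=\Prob(t_n\le Y<t)\to0$.

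If $s<t$, then $\mu((s,t))=0$, so $f_t=f_{t'}$ almost surely for all $t'\in(s,t]$. Since $t$ is not an atom, we also have $f_t=\1\{Y>s\}$ almost surely. Now $s$ lies in the support. If $\mu(\{s\})>0$, then $s\in T$, but $f_s=\1\{Y\ge s\}\neq f_t$. In that case we instead look to the right of $t$. Put $r=\inf(\operatorname{supp}\mu\cap[t,\infty))$. If $\Prob(Y\ge t)=0$, then $f_t=0$ almost surely and $\Phi(0)\in S$. Otherwise $f_t=\1\{Y\ge r\}$ almost surely. If $r$ is an atom, then $r\in T$ and $f_r=f_t$, so we are done. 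If $r$ is not an atom, then $\mu((r,r+\varepsilon))>0$ for every $\varepsilon>0$, so there are $t_n\downarrow r$ in $T$. Then $\|f_{t_n}-f_r\|_2^2=\Prob(r\le Y<t_n)\to\mu(\{r\})=0$, and $f_r=f_t$ almost surely. This right-approximation argument actually covers the whole case $s<t$, and it is cleaner to use it throughout.

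So the final structure is as follows. Either $t$ is approximable from the left, or $f_t$ coincides almost surely with $f_r$ for $r=\inf(\operatorname{supp}\mu\cap[t,\infty))$, which is either an atom or approximable from the right, or $f_t$ is a constant in $\{0,1\}$. In every case, $f_t$ lies in the $L^2$-closure of $\{f_{t'}:t'\in T\}\cup\{0,1\}$, and continuity of $\Phi$ together with closedness of $S$ gives $\Phi(f_t)\in S$.

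The main obstacle is the bookkeeping at boundary points of the support: one must make sure that right-approximations converge to $\1\{Y\ge r\}$ rather than to $\1\{Y>r\}$. This is exactly why non-atomicity of $r$ is needed in the right-limit case, while the atom case is handled directly by $r\in T$.
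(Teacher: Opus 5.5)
Your proof is correct and is essentially the paper's argument: the same trichotomy (an atom, approximation from the left, approximation from the right at a non-atom), combined with passing to the nearest support point $r=\min(\operatorname{supp}\mu\cap[t,\infty))$ or to a constant in $\{0,1\}$. The only step you leave implicit is that $\mu((r-\varepsilon,r))=0$ for small $\varepsilon$, which follows from $\mu((s,r))=0$ and $r>s$ and is what makes a non-atomic $r\in\operatorname{supp}\mu$ approximable from the right; the abandoned detour through a possible atom at $s$ can simply be deleted.
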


\begin{proof}
Let $T_0$ be the set of all $t$ with $\Phi(f_t)\in S$, so that $\mu(\R\setminus T_0)=0$, and fix $t\in\R$. If $\mu(\{t\})>0$, then $t\in T_0$. If $\mu((t-\varepsilon,t))>0$ for every $\varepsilon>0$, choose $t_n\in T_0$ with $t_n\uparrow t$; then $f_{t_n}\downarrow f_t$ pointwise, hence in $L^2$. If $\mu(\{t\})=0$ and $\mu((t,t+\varepsilon))>0$ for every $\varepsilon>0$, choose $t_n\in T_0$ with $t_n\downarrow t$; then $f_{t_n}\uparrow\1\{Y>t\}=f_t$ almost surely. Otherwise $t\notin\operatorname{supp}\mu$, and $f_t$ equals almost surely either a constant or $f_{t'}$ with $t'=\min(\operatorname{supp}\mu\cap[t,\infty))$, where $t'$ falls under one of the previous cases. In all cases $\Phi(f_t)\in S$ by continuity and closedness.
\end{proof}

\begin{lemma}\label{lem:monfun}
Let $Y=\varphi(X)$ almost surely for a measurable $\varphi$, and let $(X',Y')$, $(X'',Y'')$ be independent copies of $(X,Y)$. If $\Prob(X'<X'',\,Y'>Y'')=0$, then $Y=f(X)$ almost surely for a non-decreasing $f\colon\R\to[-\infty,\infty]$ that is finite $\Prob_X$-almost surely.
\end{lemma}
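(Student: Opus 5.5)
We work with the joint law $\nu$ of $(X,Y)$ on $\R^2$, which is concentrated on the graph of $\varphi$. Since the two copies are independent, the hypothesis says $(\nu\otimes\nu)\{(x',y',x'',y''):x'<x'',\,y'>y''\}=0$. The plan is to find a set $G$ of full $\nu$-measure on which the relation ``$x'<x''\Rightarrow y'\le y''$'' holds for \emph{all} pairs of points, not just almost all. We then define $f$ by a monotone envelope of $G$.

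\emph{Step 1: a full-measure set on which the order is respected pointwise.} For a point $p=(x,y)$, let
\[
N(p)=\nu\{(x'',y''):x''>x,\ y''<y\}+\nu\{(x',y'):x'<x,\ y'>y\}.
\]
By Fubini, $\int N\,d\nu=2(\nu\otimes\nu)\{x'<x'',y'>y''\}=0$, so $N=0$ $\nu$-a.e. Let $G_0=\{N=0\}\cap\{(x,\varphi(x))\}$ and $G=G_0\cap\operatorname{supp}\nu$; both intersections have full measure.

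Now take $p=(x,y)$ and $q=(x'',y'')$ in $G$ with $x<x''$ and $y>y''$. The open rectangle $(-\infty,x'')\times(y'',\infty)$ contains $p$, and $p\in\operatorname{supp}\nu$, so this rectangle has positive mass. Every point of the rectangle lies strictly to the upper left of $q$. Hence $N(q)\ge\nu(\text{rectangle})>0$, which contradicts $q\in G$. So $G$ is a monotone set: for $p,q\in G$, $x_p<x_q$ implies $y_p\le y_q$. Since $G$ lies on the graph of $\varphi$, points of $G$ with equal first coordinate also have equal second coordinate.

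\emph{Step 2: define $f$.} Let $A$ be the projection of $G$ to the first coordinate. Then $\Prob_X(A)=1$, and $y=\varphi(x)$ on $A$. For every $x\in\R$ set
\[
f(x)=\sup\{\varphi(a):a\in A,\ a\le x\},
\]
with $\sup\emptyset=-\infty$. This $f$ is non-decreasing with values in $[-\infty,\infty]$. For $x\in A$, Step 1 gives $\varphi(a)\le\varphi(x)$ whenever $a\le x$ with $a\in A$, so the supremum is attained at $a=x$ and $f(x)=\varphi(x)$, which is finite. Thus $Y=\varphi(X)=f(X)$ almost surely, and $f$ is finite on $A$, hence $\Prob_X$-almost surely. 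The function $f$ is Borel because it is monotone.

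\emph{Main obstacle.} The main difficulty is Step 1: passing from an almost-sure statement about independent pairs to a statement about every pair of points in a full-measure set. The support argument above handles this. The point to verify is that $p\in\operatorname{supp}\nu$ really does give positive mass to the open quadrant strictly to the upper left of $q$. This holds because $p$ lies in the interior of that quadrant, since the inequalities $x<x''$ and $y>y''$ are strict. This interiority is why the hypothesis is phrased with strict inequalities.
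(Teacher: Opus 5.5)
Your proof is correct, but it follows a different route from the paper's.

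The paper works on the line. It defines $f(x)$ as the $\Prob_X$-essential supremum of $\varphi$ over $(-\infty,x]$, and shows $\varphi(X)\le f(X)$ almost surely by approximating the intervals $\{f\le c\}$ from inside. It then rules out $\Prob(f(X)>c>\varphi(X))>0$ for rational $c$ by showing directly that this would give positive mass to $\{X'<X'',\,Y'>Y''\}$.

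You instead pass to the plane. Using the Fubini function $N$ and the support of the joint law $\nu$, you extract a full-measure set $G$ on which the order relation holds for \emph{every} pair of points. The pointwise supremum $f$ over $A$ then coincides with $\varphi$ exactly on $A$, not merely almost surely. This is the standard argument that a measure concentrated on an essentially monotone relation has a monotone support, as for supports of couplings in optimal transport.

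What your route buys: it isolates where the strict inequalities enter, namely interiority of $p$ in the open quadrant at $q$. It also avoids the paper's attainment-of-infimum and rational-threshold bookkeeping, and only the second summand of $N$ is actually needed. What the paper's route buys: its $f$ is canonical, since changing $\varphi$ on a $\Prob_X$-null set does not alter it. It also needs neither the support of $\nu$ nor any measurability considerations in $\R^2$.

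One point you assert without justification deserves a line. Since $G$ lies on the graph of $\varphi$, the set $A$ is the preimage of $G$ under the Borel map $x\mapsto(x,\varphi(x))$. This gives both that $A$ is Borel and that $\Prob_X(A)=\Prob\bigl((X,\varphi(X))\in G\bigr)=\nu(G)=1$. The same fact, that the graph of a Borel $\varphi$ is Borel, is what makes $G_0$ measurable.
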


\begin{proof}
Let $f(x)=\inf\{c\in\R:\Prob(X\le x,\ \varphi(X)>c)=0\}$, the essential supremum of $\varphi$ with respect to $\Prob_X$ on $(-\infty,x]$. Clearly, $f$ is non-decreasing, and for every $x$ with $-\infty<f(x)<\infty$ the infimum is attained, since $\Prob(X\le x,\ \varphi(X)>c')\uparrow\Prob(X\le x,\ \varphi(X)>c)$ as $c'\downarrow c$. For rational $c$, the set $\{f\le c\}$ is an interval $J$ unbounded to the left, and $\Prob(X\in J,\ \varphi(X)>c)=0$ by approximating $J$ from inside. Hence $\Prob(\varphi(X)>c\ge f(X))=0$ for all rational $c$, i.e., $\varphi(X)\le f(X)$ almost surely. Suppose that $\Prob(f(X)>c>\varphi(X))>0$ for some rational $c$. For every $x$ with $f(x)>c$ we have $\Prob(X'\le x,\ \varphi(X')>c)>0$, hence $\Prob(X'\le X'',\ \varphi(X')>c>\varphi(X''))>0$. On $\{X'=X''\}$ the event $\{\varphi(X')>c>\varphi(X'')\}$ is empty, so $\Prob(X'<X'',\ Y'>Y'')>0$, a contradiction. Hence $\varphi(X)=f(X)$ almost surely.
\end{proof}

\begin{theorem}\label{thm:general}
Let $Y$ be non-degenerate.
\begin{enumerate}
\item[(a)] $\xi(Y|X)=\xiu(Y|X)+\frac{\int\|\E[f_t\mid X]-\Pi^\uparrow_Xf_t\|^2\,d\mu(t)}{\int\Var(f_t)\,d\mu(t)}$; in particular, $0\le\xiu(Y|X)\le\xi(Y|X)\le1$.
\item[(b)] $\xiu(Y|X)=\xi(Y|X)$ if and only if $Y$ is SI in $X$.
\item[(c)] $\xiu(Y|X)=0$ if and only if $(X,Y)$ is NQD, i.e., $\Prob(X\le x,Y\le y)\le\Prob(X\le x)\Prob(Y\le y)$ for all $x,y$.
\item[(d)] $\xiu(Y|X)=1$ if and only if $Y=f(X)$ almost surely for a non-decreasing $f\colon\R\to[-\infty,\infty]$ (which is then finite $\Prob_X$-almost surely).
\item[(e)] $\xiu(h(Y)|g(X))=\xiu(Y|X)$ for all strictly increasing $g,h\colon\R\to\R$.
\item[(f)] If $X$ and $Y$ have continuous distribution functions and copula $C$, then $\xiu(Y|X)=\xiu(C)$.
\end{enumerate}
\end{theorem}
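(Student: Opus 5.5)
The plan is to prove the six parts in the order (a), (b), (c), (e), (f), (d), since (d) will reuse (a), (b) and Lemma~\ref{lem:monfun}.

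\emph{Part (a).} Integrate \eqref{eq:pyth-general} over $t$ against $\mu$ and divide by $\int\Var(f_t)\,d\mu(t)>0$. This gives the decomposition. Since $\Var(\E[f_t\mid X])\le\Var(f_t)$, we also get $0\le\xiu\le\xi\le1$.

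\emph{Part (b).} Equality $\xiu=\xi$ holds if and only if $\E[f_t\mid X]=\Pi^\uparrow_Xf_t$ for $\mu$-almost every $t$, that is, $\E[f_t\mid X]\in\K_X$. I apply Lemma~\ref{lem:allt} with $\Phi=\E[\cdot\mid X]$ and $S=\K_X$; constants lie in $\K_X$. This upgrades the statement to every $t$. Since $0\le\E[f_t\mid X]\le1$, Lemma~\ref{lem:KX} yields a non-decreasing $k_t$ with values in $[0,1]$, which is exactly SI. Conversely, if $Y$ is SI in $X$, then a bounded $k_t$ gives $\E[f_t\mid X]\in\K_X$ directly.

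\emph{Part (c).} Here I mimic the polar-cone argument of Theorem~\ref{thm:polar}. Since $\K_X$ is a closed convex cone, $\Pi^\uparrow_X f=c$ is constant if and only if $f-c\in\K_X^\circ$, i.e.\ $\E[(f-c)k(X)]\le0$ for all bounded non-decreasing $k$. Because constants are in $\K_X$, the constant must be $c=\E f$.

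Non-decreasing bounded $k$ are generated, as a cone modulo constants, by the indicators $\1\{X>x\}$ and $\1\{X\ge x\}$. So the condition reduces to $\Cov(f,\1\{X>x\})\le0$ and $\Cov(f,\1\{X\ge x\})\le0$ for all $x$. With $f=\1\{Y\ge t\}$, this is equivalent to $\Prob(X\le x,Y<t)\le\Prob(X\le x)\Prob(Y<t)$, together with the variants using strict and non-strict inequalities. These follow from the NQD inequality by monotone limits.

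Thus $\xiu=0$ if and only if $\Var(\Pi^\uparrow_Xf_t)=0$ for $\mu$-almost every $t$. Lemma~\ref{lem:allt} extends this to all $t$, taking $S$ to be the closed set of constants and $\Phi=\Pi^\uparrow_X$. Finally, letting $t$ range over $\R$ recovers every event $\{Y\le y\}=\{Y<y'\}$ in the limit. I expect the main obstacle to be checking this reduction to indicators carefully: it needs an approximation of a general bounded non-decreasing $k$ by positive combinations of such indicators, uniformly in $L^\infty$, plus the bookkeeping of strict versus non-strict inequalities.

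\emph{Part (e).} Strictly increasing $h$ preserves the family $\{f_t\}$, reindexed, with matching $\mu$. Strictly increasing $g$ satisfies $\sigma(g(X))=\sigma(X)$, and $\K_{g(X)}=\K_X$, because $k\circ g^{-1}$ extends to a non-decreasing function on $\R$ and vice versa.

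\emph{Part (f).} By (e), applied to the strictly increasing parts, I reduce to $(U,V)\sim C$ with uniform margins. The mild issue is that the distribution functions need not be strictly increasing; this is handled because $X=F^{-1}(U)$ almost surely and monotone functions of $X$ correspond to monotone functions of $U$.

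Next I identify $\Pi^\uparrow_U\1\{V\ge t\}$. I have $\E[\1\{V\ge t\}\mid U]=1-\partial_1C(U,t)$. Lemma~\ref{lem:iso} applies here: $\K_U$ corresponds to non-decreasing functions in $L^2(0,1)$, which is $-\K$. Hence the projection is $1-\I\partial_1C(\cdot,t)=1-\partial_1\Proj C(\cdot,t)$ by Theorem~\ref{thm:proj}(a). Its variance is $\int(\partial_1\Proj C-t)^2\,du$, and the denominator equals $1/6$, so I obtain $6\,d_1(\Proj C,\Pi)^2$.

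\emph{Part (d).} If $\xiu=1$, then by (a) $\xi=1$ and the remainder vanishes, so $Y$ is SI in $X$ by (b). Moreover $\xi=1$ gives $Y=\varphi(X)$ almost surely by the known characterisation of $\xi$. Lemma~\ref{lem:monfun} then requires $\Prob(X'<X'',Y'>Y'')=0$. This holds because SI forces $\E[\1\{Y\ge t\}\mid X]=\1\{\varphi(X)\ge t\}$ to be non-decreasing in $X$ outside a null set for each rational $t$, and taking the union over rational $t$ gives the required probability zero.

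For the converse, if $Y=f(X)$ with $f$ non-decreasing, then each $f_t=\1\{f(X)\ge t\}$ is a bounded non-decreasing function of $X$, so $f_t\in\K_X$. Hence $\Var(\Pi^\uparrow_X f_t)=\Var(f_t)$ and $\xiu=1$.
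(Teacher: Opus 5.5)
Your proposal is correct and follows the paper's proof essentially step by step: the integrated Pythagorean identity for (a), Lemmas~\ref{lem:allt} and~\ref{lem:KX} for (b), the rational-threshold argument feeding Lemma~\ref{lem:monfun} for (d), coincidence of the generating sets of $\K_X$ and $\K_{g(X)}$ for (e), and the identification of $\K_U$ with $-\K$ together with Theorem~\ref{thm:proj}(a) for (f). The only real variation is in (c), where you replace the paper's appeal to Lehmann's covariance characterisation of NQD by a uniform approximation of bounded non-decreasing functions through non-negative combinations of half-line indicators plus constants; this makes that step self-contained. In (f), note that (e) does not literally apply because $F$ and $G$ need not be strictly increasing, so you should also record, as the paper does, that $f_t=\1\{V\ge G(t)\}$ almost surely and that the substitution $v=G(t)$ turns $\mu$ into Lebesgue measure on $[0,1]$.
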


\begin{proof}
(a) Integrate \eqref{eq:pyth-general} with $f=f_t$ with respect to $\mu$ and use \eqref{eq:xi-general}.

(b) By (a), equality holds if and only if $\E[f_t\mid X]\in\K_X$ for $\mu$-almost every $t$. Lemma~\ref{lem:allt} with $\Phi=\E[\,\cdot\mid X]$ and $S=\K_X$ extends this to every $t$. Since $\E[f_t\mid X]$ takes values in $[0,1]$, Lemma~\ref{lem:KX} shows that $\E[f_t\mid X]\in\K_X$ if and only if $\E[f_t\mid X]=k_t(X)$ almost surely for a non-decreasing $k_t\colon\R\to[0,1]$.

(c) $\xiu(Y|X)=0$ if and only if $\Pi^\uparrow_Xf_t$ is constant, and hence equal to $\E f_t$ by mean preservation, for $\mu$-almost every $t$, which by Lemma~\ref{lem:allt} (with $\Phi=\Pi^\uparrow_X$ and $S$ the set of constants) holds if and only if it is constant for every $t$. Since $\K_X$ is a closed convex cone containing the constants, the constant $\E f_t$ is the projection of $f_t$ if and only if $\ip{f_t-\E f_t}{k-\E f_t}=\Cov(f_t,k)\le0$ for all $k\in\K_X$. Since $k\mapsto\Cov(f_t,k)$ is continuous on $L^2(\Prob)$, it suffices to consider the generating elements of $\K_X$. Thus $\xiu(Y|X)=0$ if and only if $\Cov(k(X),\1\{Y\ge t\})\le0$ for all bounded non-decreasing $k$ and all $t$. By \citet{Lehmann1966}, NQD is equivalent to $\Cov(a(X),b(Y))\le0$ for all bounded non-decreasing $a,b$, which gives one direction. Conversely, taking $k=\1\{\cdot>x\}$ and approximating $\1\{Y>y\}$ by $\1\{Y\ge t_n\}$, $t_n\downarrow y$, gives $\Prob(X>x,Y>y)\le\Prob(X>x)\Prob(Y>y)$ for all $x,y$, which is equivalent to NQD.

(d) If $Y=f(X)$ with $f$ non-decreasing, then $f_t=\1\{f(X)\ge t\}$ is a bounded non-decreasing function of $X$, hence $f_t\in\K_X$, so $\Pi^\uparrow_Xf_t=f_t$ and $\xiu(Y|X)=1$. Conversely, let $\xiu(Y|X)=1$. By (a) and (b), $\xi(Y|X)=1$ and $Y$ is SI in $X$. By \citet{Chatterjee2021}, $Y=\varphi(X)$ almost surely for a measurable $\varphi$. Then $\1\{\varphi(X)\ge t\}=\Prob(Y\ge t\mid X)=k_t(X)$ almost surely, with $k_t$ non-decreasing, for every $t$ (by (b)). For independent copies, $\Prob(X'<X'',\ Y'\ge t>Y'')\le\Prob(X'<X'',\ k_t(X')=1,\ k_t(X'')=0)=0$ for every $t$. Since $\{Y'>Y''\}=\bigcup_{q\in\mathbb Q}\{Y'\ge q>Y''\}$, we obtain $\Prob(X'<X'',\,Y'>Y'')=0$, and Lemma~\ref{lem:monfun} applies. The extended range of $f$ cannot be avoided: for $X$ uniformly distributed on $(0,1)$ and $Y=\log X$, every non-decreasing $f$ with $Y=f(X)$ almost surely must satisfy $f=-\infty$ on $(-\infty,0]$.

(e) If $k$ is bounded and non-decreasing, then so is $k\circ g$. Conversely, with the non-decreasing generalised inverse $g^\leftarrow(y)=\sup\{x\in\R:g(x)\le y\}\in[-\infty,\infty]$ we have $g^\leftarrow(g(x))=x$, and $k(X)=\bar k(g^\leftarrow(g(X)))$, where $\bar k$ is the extension of $k$ to $[-\infty,\infty]$ by its limits. Hence the generating sets, and thus $\K_{g(X)}$ and $\K_X$, coincide. Moreover, $\{h(Y)\ge h(s)\}=\{Y\ge s\}$, and the image of $\mu$ under $h$ is the distribution of $h(Y)$, so numerator and denominator are unchanged.

(f) Let $U=F(X)$ and $V=G(Y)$, which are uniformly distributed. Since $U=F(X)$ and $X=F^{-1}(U)$ almost surely, where $F$ and the quantile function $F^{-1}$ are non-decreasing, the generating sets of $\K_X$ and $\K_U$ coincide up to null sets, as in (e); hence $\K_X=\K_U$. Since $G$ is continuous, $f_t=\1\{V\ge G(t)\}$ almost surely, and the substitution $v=G(t)$ shows that the denominator equals $\int_0^1v(1-v)\,dv=\frac16$. Moreover, $\Prob(V\ge v\mid U)=1-\partial_1C(U,v)$ almost surely, since $\Prob(V=v)=0$. The map $\phi\mapsto\phi(U)$ is an isometry from $L^2(0,1)$ onto $L^2(\sigma(U))$, and it maps the cone of non-decreasing functions in $L^2(0,1)$, which is the closure of the bounded ones, onto $\K_U$. Since this cone is $-\K$, we obtain $\Pi^\uparrow_U(1-\partial_1C(U,v))=1-\I(\partial_1C(\cdot,v))(U)$. As $\I$ preserves the mean, the numerator equals $\int_0^1\int_0^1(\I\partial_1C(\cdot,v)-v)^2\,du\,dv$, and Theorem~\ref{thm:proj}(a) gives $\xiu(Y|X)=6\,d_1(\Proj C,\Pi)^2=\xiu(C)$.
\end{proof}

\begin{remark}[Variance decomposition]\label{rem:anova}
The three nested sets $\K_X\subset L^2(\sigma(X))\subset L^2(\Prob)$ yield an orthogonal decomposition of the variation of the indicators $f_t$. For every $t$,
\[
f_t-\E f_t=\bigl(\Pi^\uparrow_Xf_t-\E f_t\bigr)+\bigl(\E[f_t\mid X]-\Pi^\uparrow_Xf_t\bigr)+\bigl(f_t-\E[f_t\mid X]\bigr),
\]
where the first two terms are orthogonal by the Moreau decomposition underlying \eqref{eq:pyth-general}, and the third term is orthogonal to $L^2(\sigma(X))$ and hence to both of them. Taking squared norms, integrating with respect to $\mu$ and dividing by $\int\Var(f_t)\,d\mu(t)$ gives
\[
1=\xiu(Y|X)+\bigl(\xi(Y|X)-\xiu(Y|X)\bigr)+\bigl(1-\xi(Y|X)\bigr),
\]
where $1-\xi(Y|X)=\int\E\Var(f_t\mid X)\,d\mu(t)\big/\!\int\Var(f_t)\,d\mu(t)$. Thus $\xi$ is the proportion of the variation of the conditional distribution functions that is explained by $X$, in the sense of a coefficient of determination for distributional regression, and $\xiu$ is the proportion explained by non-decreasing functions of $X$, i.e., by isotonic distributional regression. The difference $\xi-\xiu$ is the proportion explained by $X$ but not by non-decreasing functions of $X$, i.e., in a decreasing or genuinely non-monotone way. This decomposition holds for arbitrary distributions.
\end{remark}

\subsection{Estimation}\label{subsec:est}

Let $(X_1,Y_1),\dots,(X_n,Y_n)$ be independent copies of $(X,Y)$, where $X$ and $Y$ have continuous distribution functions $F$ and $G$ and copula $C$, and let $\Prob_n$ be the empirical distribution of the sample. Almost surely there are no ties, which we assume from now on. Since $\xiu(Y|X)$ is defined for every distribution, the plug-in estimator $\xiu(\Prob_n)$, i.e., $\xiu$ evaluated for a random vector with distribution $\Prob_n$, is available. For $\xi$ itself the plug-in principle fails: under $\Prob_n$, the conditional probabilities $\Prob_n(Y\ge t\mid X)$ are indicators, so that $\xi(\Prob_n)=1$ for every sample. Smoothing the empirical distribution into the empirical checkerboard copula $\widehat C_n$ introduced below does not help: a short calculation gives $\xi(\widehat C_n)=1-1/n$, although $\widehat C_n\to C$ uniformly, which reflects the lack of weak continuity of $\xi$ \citep{BucherDette2025}. Consistent estimators of $\xi$ instead rely on a nearest-neighbour construction \citep{Chatterjee2021,AzadkiaChatterjee2021} or on smoothing of copula derivatives \citep{DetteSiburgStoimenov2013}.

To describe $\xiu(\Prob_n)$, order the sample according to $X_{(1)}<\dots<X_{(n)}$, and let $S_i$ be the rank of the $Y$-value of the observation with $X$-value $X_{(i)}$ among $Y_1,\dots,Y_n$. For $k=0,\dots,n$ let $b_k\in\R^n$ be given by $(b_k)_i=\1\{S_i\le k\}$. We equip $\R^n$ with the norm $\|x\|_n^2=\frac1n\sum_{i=1}^nx_i^2$ and denote by $\I_n$ the metric projection onto the cone of non-increasing vectors, which is computed by the pool-adjacent-violators algorithm with equal weights; real numbers are identified with constant vectors. Finally, $\widehat C_n$ denotes the empirical checkerboard copula, which distributes the mass $1/n$ uniformly over each of the squares $[\frac{i-1}n,\frac in]\times[\frac{S_i-1}n,\frac{S_i}n]$, $i=1,\dots,n$ \citep{GenestNeslehovaRemillard2017}.

\begin{theorem}\label{thm:est}
Let $X$ and $Y$ have continuous distribution functions and copula $C$, and let $n\ge2$.
\begin{enumerate}
\item[(a)] For every sample without ties,
\[
\xiu(\Prob_n)=\frac{6n}{n^2-1}\sum_{k=1}^{n-1}\Bigl\|\I_nb_k-\frac kn\Bigr\|_n^2 .
\]
In particular, $\xiu(\Prob_n)$ depends only on the ranks of the sample, and it can be computed by $n-1$ runs of the pool-adjacent-violators algorithm, i.e., with $O(n^2)$ operations.
\item[(b)] For every sample without ties, $\bigl|\xiu(\Prob_n)-\xiu(\widehat C_n)\bigr|\le4/\sqrt n$.
\item[(c)] $\xiu(\Prob_n)\to\xiu(C)$ and $\xiu(\widehat C_n)\to\xiu(C)$ almost surely as $n\to\infty$.
\end{enumerate}
\end{theorem}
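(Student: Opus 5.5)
The plan treats the three parts in order. Part (a) is a direct evaluation of Definition~\ref{def:general} under $\Prob_n$. Part (b) compares the resulting Riemann-type sum with an integral over $v$ using non-expansiveness of $\I_n$. Part (c) combines weak continuity, Theorem~\ref{thm:xiu}(f), with (b).

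\emph{Part (a).} Under $\Prob_n$, $X$ is uniform on the $n$ atoms $X_{(1)}<\dots<X_{(n)}$, and $\mu$ is uniform on $Y_1,\dots,Y_n$. Identify $L^2(\sigma(X))$ with $(\R^n,\|\cdot\|_n)$, indexing by the order of the $X$-values. If $t$ is the $Y$-value of rank $j$, then $f_t=\1\{S\ge j\}=1-b_{j-1}$. Since there are no ties, $\E[f_t\mid X]=f_t$.

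By the remark after Definition~\ref{def:general}, $\Pi^\uparrow_X$ is isotonic regression onto non-decreasing vectors with equal weights, so $\Pi^\uparrow_X(1-b_k)=1-\I_nb_k$. Mean preservation then gives $\Var(\Pi^\uparrow_Xf_t)=\|\I_nb_k-\tfrac kn\|_n^2$. The term $k=0$ vanishes, so the numerator is $\frac1n\sum_{k=1}^{n-1}\|\I_nb_k-\frac kn\|_n^2$.

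The denominator is $\frac1n\sum_{k=0}^{n-1}\frac kn(1-\frac kn)=\frac{n^2-1}{6n^2}$, using $\sum k(n-k)=n(n^2-1)/6$. Dividing gives the formula. Rank dependence and the $O(n^2)$ cost are then immediate.

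\emph{Part (b).} On the block $[\frac{i-1}n,\frac in)$, the derivative $\partial_1\widehat C_n(\cdot,v)$ is constant, equal to $c_i(v)=\min\{1,\max\{0,nv-S_i+1\}\}$. The isotonic regression of a function that is constant on the $n$ equal blocks is itself block-constant and equals $\I_n$ applied to the vector $c(v)$ (PAVA with equal weights). Hence, by Theorem~\ref{thm:proj}(a),
\[
\xiu(\widehat C_n)=6\int_0^1\phi(v)^2\,dv,\qquad \phi(v)=\|\I_nc(v)-v\|_n .
\]
At $v=k/n$ we have $c(k/n)=b_k$. For $v\in[\frac kn,\frac{k+1}n]$ the vector $c(v)$ differs from $b_k$ in a single coordinate, by at most $1$. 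Non-expansiveness of $\I_n$ (Lemma~\ref{lem:iso}(iv)) therefore gives $|\phi(v)-\phi(k/n)|\le n^{-1/2}+n^{-1}$.

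Comparing on the square-root scale, $\sqrt{6\int\phi^2}$ is within $\sqrt6\,(n^{-1/2}+n^{-1})$ of $\sqrt{6\cdot\frac1n\sum_k\phi(k/n)^2}$ by Minkowski's inequality. Since both quantities lie in $[0,1]$, we can use $|a-b|\le2|\sqrt a-\sqrt b|$. It remains to absorb the normalisation factor $\frac{n^2}{n^2-1}$ between this Riemann sum and $\xiu(\Prob_n)$, which costs $O(1/n^2)$. I expect the bookkeeping needed to land exactly on the constant $4$ to be the fiddliest step: the crude chain gives roughly $2\sqrt6/\sqrt n$ plus lower-order terms. If that does not close, I would first sharpen the one-coordinate estimate by a direct computation of $\int_{k/n}^{(k+1)/n}|\phi(v)^2-\phi(k/n)^2|\,dv$.

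\emph{Part (c).} The empirical checkerboard copula satisfies $\|\widehat C_n-C\|_\infty\to0$ almost surely. This follows from Glivenko--Cantelli for the empirical copula, together with the fact that the checkerboard version differs from the rank-based empirical copula by at most $O(1/n)$ uniformly \citep{GenestNeslehovaRemillard2017}. Theorem~\ref{thm:xiu}(f) then yields $\xiu(\widehat C_n)\to\xiu(C)$ almost surely, and part (b) transfers this to $\xiu(\Prob_n)$.
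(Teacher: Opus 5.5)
Parts (a) and (c) follow the paper's argument. In (c) the paper derives $\|\widehat C_n-C\|_\infty\to0$ directly: it uses $\widehat C_n(\frac kn,\frac ln)=H_n(X_{(k)},Y_{(l)})$, applies Glivenko--Cantelli to $H_n,F_n,G_n$, and adds a $4/n$ Lipschitz correction off the grid. That is what your citation supplies. Part (b), however, is not proved with the stated constant. Your chain combines the uniform bound $|\phi(v)-\phi(k/n)|\le n^{-1/2}+n^{-1}$, Minkowski's inequality and $|a-b|\le2|\sqrt a-\sqrt b|$. It yields $|\xiu(\Prob_n)-\xiu(\widehat C_n)|\le2\sqrt6\,(n^{-1/2}+n^{-1})+n^{-2}$. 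Since $2\sqrt6\approx4.90>4$, this exceeds $4/\sqrt n$ for every $n\ge2$. The bound is enough for (c), but it does not establish (b).

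Three refinements close the gap. Write $v=(k+s)/n$ with $s\in[0,1]$, so that $c(v)=b_k+s\,e_m$ with $S_m=k+1$. Put $A=\I_n(b_k+s\,e_m)-v$ and $B=\I_nb_k-\frac kn$.

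\emph{First}, $\I_n$ commutes with adding constants, since $\K+c=\K$. Hence $\|A-B\|_n\le\|s\,e_m-\frac sn\|_n=s\sqrt{n-1}/n\le s/\sqrt n$. The error is now proportional to $s$, and the separate shift $s/n$ you carry disappears.

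\emph{Second}, $b_k+s\,e_m$ and $b_k$ have entries in $[0,1]$ and means $v$ and $k/n$. Lemma~\ref{lem:iso}(ii) therefore gives $\|A\|_n,\|B\|_n\le\frac12$, hence $\bigl|\|A\|_n^2-\|B\|_n^2\bigr|\le s/\sqrt n$.

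\emph{Third}, integrate this bound in $s$ instead of taking the supremum. The factor $\int_0^1s\,ds=\frac12$ gives $|\xiu(\widehat C_n)-T_n|\le3/\sqrt n$, where $T_n=\frac6n\sum_{k=1}^{n-1}\|\I_nb_k-\frac kn\|_n^2=\frac{n^2-1}{n^2}\,\xiu(\Prob_n)$. Adding $|\xiu(\Prob_n)-T_n|\le n^{-2}$ gives $4/\sqrt n$.

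Your Minkowski route also closes once you use the $s$-proportional bound. The $L^2(0,1)$-distance between $\phi$ and its step version $v\mapsto\phi(\lfloor nv\rfloor/n)$ is then at most $(3n)^{-1/2}$, and the total becomes $2\sqrt2/\sqrt n+n^{-2}\le4/\sqrt n$. The first refinement is not cosmetic: with the extra $s/n$ term, both versions exceed $4/\sqrt n$ for small $n$, e.g.\ $n=2$.

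A minor point: your claim that $\I g$ is block-constant for step functions $g$ needs justification. The paper's one-line argument works: the block-averaging projection maps $\K$ into itself and is a contraction, so uniqueness of the metric projection forces $\I g$ to be block-constant.
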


\begin{proof}
(a) Since $n\ge2$ and there are no ties, $Y$ is non-degenerate under $\Prob_n$, so $\xiu(\Prob_n)$ is defined. Under $\Prob_n$, the variable $X$ has the atoms $X_{(1)},\dots,X_{(n)}$, each with mass $1/n$. Hence $L^2(\Prob_n)$-functions of $X$ are identified with vectors in $(\R^n,\|\cdot\|_n)$, and $\K_X$ with the cone of non-decreasing vectors. Let $t$ be the $Y$-value of rank $k+1$, $k\in\{0,\dots,n-1\}$. Then $f_t=\1\{Y\ge t\}$ corresponds to the vector $1-b_k$, and $\Prob_n(Y\ge t)=1-k/n$. Since $x\mapsto1-x$ maps the non-increasing vectors onto the non-decreasing ones and is an affine isometry, $\Pi^\uparrow_Xf_t=1-\I_nb_k$. By the mean preservation established before \eqref{eq:pyth-general}, applied to $\Prob_n$, the vector $\Pi^\uparrow_Xf_t$ has mean $1-k/n$. Thus $\Var(\Pi^\uparrow_Xf_t)=\|\I_nb_k-k/n\|_n^2$ and $\Var(f_t)=\frac kn(1-\frac kn)$. Since the $Y$-marginal of $\Prob_n$ puts mass $1/n$ on each $Y$-value and the term with $k=0$ vanishes, Definition~\ref{def:general} gives
\[
\xiu(\Prob_n)=\frac{\frac1n\sum_{k=1}^{n-1}\|\I_nb_k-k/n\|_n^2}{\frac1n\sum_{k=0}^{n-1}\frac kn\bigl(1-\frac kn\bigr)}
=\frac{6n}{n^2-1}\sum_{k=1}^{n-1}\Bigl\|\I_nb_k-\frac kn\Bigr\|_n^2 ,
\]
because $\sum_{k=0}^{n-1}k(n-k)=n(n^2-1)/6$.

(b) For $u$ in the $i$-th strip $(\frac{i-1}n,\frac in)$ we have $\partial_1\widehat C_n(u,v)=h_i(v):=\min\{\max\{nv-S_i+1,0\},1\}$. The orthogonal projection $E$ of $L^2(0,1)$ onto the functions that are constant on each strip maps $\K$ into itself and satisfies $\|g-Ek\|=\|E(g-k)\|\le\|g-k\|$ whenever $g$ is constant on the strips. Hence, by uniqueness, the projection $\I g$ of such a $g$ is again constant on the strips, and it is given by $\I_n$ applied to the vector of strip values. Since $\|\cdot\|_{L^2(0,1)}$ reduces to $\|\cdot\|_n$ for such functions, Theorem~\ref{thm:proj}(a) gives
\[
\xiu(\widehat C_n)=6\int_0^1\bigl\|\I_nh(v)-v\bigr\|_n^2\,dv,\qquad h(v)=(h_1(v),\dots,h_n(v)).
\]
Let $v=(k+s)/n$ with $k\in\{0,\dots,n-1\}$ and $s\in[0,1]$, and let $m$ be the index with $S_m=k+1$. Then $h(v)=b_k+s\,e_m$, where $e_m$ is the $m$-th unit vector. Put $A=\I_n(b_k+s\,e_m)-v$ and $B=\I_nb_k-k/n$. Via this identification, $\I_n$ inherits the properties of $\I$: it commutes with the addition of constants, because $\K+c=\K$ for every constant $c$, and it is non-expansive by Lemma~\ref{lem:iso}(iv). Hence
\[
\|A-B\|_n\le\Bigl\|s\,e_m-\frac sn\Bigr\|_n=\frac{s\sqrt{n-1}}{n}\le\frac{s}{\sqrt n}.
\]
Moreover, the vectors $b_k+s\,e_m$ and $b_k$ have entries in $[0,1]$ and means $v$ and $k/n$, respectively; by Lemma~\ref{lem:iso}(ii), $\|A\|_n$ and $\|B\|_n$ are bounded by the corresponding standard deviations, hence by $\frac12$. Consequently $\bigl|\|A\|_n^2-\|B\|_n^2\bigr|\le(\|A\|_n+\|B\|_n)\|A-B\|_n\le s/\sqrt n$. Integrating over $v$, i.e., over $s$ with weight $1/n$, and summing over $k$, we obtain
\[
\Bigl|\xiu(\widehat C_n)-T_n\Bigr|\le\frac{3}{\sqrt n},\qquad T_n:=\frac6n\sum_{k=1}^{n-1}\Bigl\|\I_nb_k-\frac kn\Bigr\|_n^2 .
\]
By (a), $T_n=\frac{n^2-1}{n^2}\,\xiu(\Prob_n)$, so $|\xiu(\Prob_n)-T_n|\le n^{-2}\,\xiu(\Prob_n)\le n^{-2}$ by Theorem~\ref{thm:general}(a). Since $3/\sqrt n+n^{-2}\le4/\sqrt n$, (b) follows.

(c) Let $H_n$, $F_n$, $G_n$ be the empirical distribution functions of the sample and of its two coordinates, and $H$ the joint distribution function of $(X,Y)$. Let $Y_{(l)}$ denote the $l$-th smallest $Y$-value. For $k,l\in\{0,\dots,n\}$, $\widehat C_n(\frac kn,\frac ln)=\frac1n\#\{i\le k:S_i\le l\}=H_n(X_{(k)},Y_{(l)})$ (with the convention $X_{(0)}=Y_{(0)}=-\infty$), and $C(F(X_{(k)}),G(Y_{(l)}))=H(X_{(k)},Y_{(l)})$. Since $F_n(X_{(k)})=\frac kn$, $G_n(Y_{(l)})=\frac ln$, and $C$ is $1$-Lipschitz in each argument,
\[
\Bigl|\widehat C_n\Bigl(\frac kn,\frac ln\Bigr)-C\Bigl(\frac kn,\frac ln\Bigr)\Bigr|\le\|H_n-H\|_\infty+\|F_n-F\|_\infty+\|G_n-G\|_\infty .
\]
Both $\widehat C_n$ and $C$ are copulas, hence $1$-Lipschitz in each argument, so $\|\widehat C_n-C\|_\infty$ exceeds the maximum over the grid by at most $4/n$. By the Glivenko--Cantelli theorem, $\widehat C_n\to C$ uniformly almost surely. Theorem~\ref{thm:xiu}(f) yields $\xiu(\widehat C_n)\to\xiu(C)$ almost surely, and (b) yields the same for $\xiu(\Prob_n)$.
\end{proof}

\begin{remark}\label{rem:est}
Theorem~\ref{thm:est} holds for every copula, including singular ones such as the mixtures and shuffles of Section~\ref{sec:examples}. It requires neither smoothing nor a tuning parameter; the regularisation is provided entirely by the monotonicity constraint, which turns weak continuity (Theorem~\ref{thm:xiu}(f)) into consistency of the plug-in. More generally, $\xiu(\widetilde C_n)\to\xiu(C)$ almost surely for every copula estimator $\widetilde C_n$ that converges uniformly almost surely. The same applies to $\xid$, $\kappa$ and $\omu$. Note that $\xiu=0$ characterises negative quadrant dependence, not independence; a consistent estimator of a functional that vanishes exactly under independence is $\kappa(\Prob_n)=\max\{\xiu(\Prob_n),\xid(\Prob_n)\}$ (Corollary~\ref{cor:max}). The projection is also useful as a shape constraint: for every copula estimator $\widetilde C_n$, $\Proj\widetilde C_n$ is an SI copula dominating $\widetilde C_n$, and if $C\in\CSI$, then $\|\Proj\widetilde C_n-C\|_\infty=\|\Proj\widetilde C_n-\Proj C\|_\infty\le\|\widetilde C_n-C\|_\infty$ by Theorem~\ref{thm:proj}(e), so that consistency is preserved. The asymptotic distribution of $\xiu(\Prob_n)$ involves the directional differentiability of the least concave majorant operator \citep{BeareFang2017} and is beyond the scope of this paper; at the boundary $\xiu(C)=0$, where the isotonic regressions pool many values, a different rate than in the interior is to be expected, in line with the simulations below. Finally, since $b_k$ and $b_{k-1}$ differ in a single entry, an incremental implementation of the $n-1$ isotonic regressions may reduce the cost below $O(n^2)$; we do not pursue this here.
\end{remark}

\emph{Simulation study.} We illustrate Theorem~\ref{thm:est} for six copulas: the independence copula $\Pi$, the Gaussian copulas with $\rho=0.5$ and $\rho=-0.5$, the mixture $\frac12M+\frac12W$, the sawtooth shuffle with $k=8$, and the copula of $(U,|2U-1|)$. The last three are singular, and the Gaussian copula with $\rho=-0.5$ has $\xi=0.1447$ but $\xiu=0$. For each copula and each sample size $n\in\{100,500,2000,5000\}$ we drew $1000$ samples and computed $\xiu(\Prob_n)$ via Theorem~\ref{thm:est}(a). Table~\ref{tab:sim} reports the empirical means and standard deviations.

\begin{table}[t]
\centering
\small
\begin{tabular}{lccccc}
\toprule
Copula & $\xiu$ & $n=100$ & $n=500$ & $n=2000$ & $n=5000$\\
\midrule
$\Pi$ & $0.0000$ & $0.042$ $(0.023)$ & $0.012$ $(0.006)$ & $0.004$ $(0.002)$ & $0.002$ $(0.001)$\\
Gaussian, $\rho=0.5$ & $0.1447$ & $0.241$ $(0.050)$ & $0.180$ $(0.022)$ & $0.161$ $(0.011)$ & $0.153$ $(0.007)$\\
Gaussian, $\rho=-0.5$ & $0.0000$ & $0.002$ $(0.003)$ & $0.000$ $(0.000)$ & $0.000$ $(0.000)$ & $0.000$ $(0.000)$\\
$\frac12M+\frac12W$ & $0.0794$ & $0.130$ $(0.057)$ & $0.094$ $(0.022)$ & $0.084$ $(0.010)$ & $0.082$ $(0.006)$\\
Sawtooth, $k=8$ & $0.1250$ & $0.142$ $(0.039)$ & $0.129$ $(0.017)$ & $0.126$ $(0.008)$ & $0.126$ $(0.005)$\\
$(U,|2U-1|)$ & $0.3178$ & $0.324$ $(0.058)$ & $0.319$ $(0.027)$ & $0.319$ $(0.013)$ & $0.318$ $(0.009)$\\
\bottomrule
\end{tabular}
\caption{Mean and standard deviation (in parentheses) of $\xiu(\Prob_n)$ over $1000$ samples of size $n$; the second column gives the true value $\xiu(C)$.}
\label{tab:sim}
\end{table}

In all cases the estimates approach the true value as $n$ grows. Apart from negligible deviations, the bias is positive, which can be attributed to the fact that the isotonic regression partly fits the sampling noise; it is largest for the Gaussian copula with $\rho=0.5$ and small for the three singular copulas. For $\xiu(C)>0$, the standard deviations decrease approximately like $n^{-1/2}$. Under independence, mean and standard deviation decrease considerably faster, the mean approximately like $(\log n)/n$, and under the negatively dependent Gaussian copula the estimates are essentially zero from $n=500$ on, in line with Theorem~\ref{thm:general}(c). The singular copulas, for which estimators of $\xi$ based on copula derivatives would require smoothing, pose no difficulty.

\emph{Comparison with $\rho_S$ and $\xi$.} Table~\ref{tab:cmp} compares, for the same six copulas and $n=1000$, the empirical Spearman's rho $\hat\rho_S$, Chatterjee's estimator $\xi_n$ \citep{Chatterjee2021} and the plug-in estimators $\xiu(\Prob_n)$ and $\xid(\Prob_n)$, the latter computed from the sample $(-X_i,Y_i)$. The copulas form groups that one of the established measures cannot separate. The Gaussian copulas with $\rho=\pm0.5$ have the same value of $\xi$, while $\xiu$ and $\xid$ attribute the dependence entirely to one direction. The copulas $\Pi$, $\frac12M+\frac12W$ and that of $(U,|2U-1|)$ all have $\rho_S=0$, although the last two are clearly dependent and the last one is even functionally dependent. Conversely, the sawtooth shuffle and the copula of $(U,|2U-1|)$ both have $\xi=1$, but the sawtooth relationship has a small increasing and no decreasing part, whereas $(U,|2U-1|)$ has substantial increasing and decreasing parts. The estimators reflect these differences; the deviations from the true values are the finite-sample bias of $\xiu(\Prob_n)$ discussed above and, for the sawtooth shuffle, a bias of $\xi_n$ caused by the discontinuities of the relationship. The standard deviation of $\xiu(\Prob_n)$ is smaller than that of $\xi_n$ except for the two functional relationships, where $\xi_n$ is almost deterministic; the price of the monotonicity constraint is the positive finite-sample bias discussed above.

\begin{table}[t]
\centering
\small
\begin{tabular}{llcccc}
\toprule
Copula & & $\rho_S$ & $\xi$ & $\xiu$ & $\xid$\\
\midrule
$\Pi$ & true & $0$ & $0$ & $0$ & $0$\\
 & est. & $0.000$ $(0.032)$ & $-0.001$ $(0.020)$ & $0.007$ $(0.003)$ & $0.006$ $(0.003)$\\[2pt]
Gaussian, $\rho=0.5$ & true & $0.4826$ & $0.1447$ & $0.1447$ & $0$\\
 & est. & $0.483$ $(0.026)$ & $0.144$ $(0.023)$ & $0.169$ $(0.016)$ & $0.000$ $(0.000)$\\[2pt]
Gaussian, $\rho=-0.5$ & true & $-0.4826$ & $0.1447$ & $0$ & $0.1447$\\
 & est. & $-0.484$ $(0.025)$ & $0.144$ $(0.023)$ & $0.000$ $(0.000)$ & $0.169$ $(0.015)$\\[2pt]
$\frac12M+\frac12W$ & true & $0$ & $0.25$ & $0.0794$ & $0.0794$\\
 & est. & $-0.001$ $(0.042)$ & $0.249$ $(0.026)$ & $0.088$ $(0.015)$ & $0.088$ $(0.015)$\\[2pt]
Sawtooth, $k=8$ & true & $0.125$ & $1$ & $0.125$ & $0$\\
 & est. & $0.126$ $(0.032)$ & $0.956$ $(0.000)$ & $0.127$ $(0.011)$ & $0.002$ $(0.002)$\\[2pt]
$(U,|2U-1|)$ & true & $0$ & $1$ & $0.3178$ & $0.3178$\\
 & est. & $0.000$ $(0.041)$ & $0.994$ $(0.000)$ & $0.318$ $(0.018)$ & $0.318$ $(0.018)$\\
\bottomrule
\end{tabular}
\caption{True values and mean and standard deviation (in parentheses) of $\hat\rho_S$, $\xi_n$, $\xiu(\Prob_n)$ and $\xid(\Prob_n)$ over $1000$ samples of size $n=1000$.}
\label{tab:cmp}
\end{table}

\section*{Declaration of generative AI and AI-assisted technologies in the manuscript preparation process}

During the preparation of this work the author used Claude (Anthropic) to assist with literature search, \LaTeX\ and language editing of the text, checking calculations, and coding the simulation study. After using this tool, the author reviewed and edited the content as needed and takes full responsibility for the content of the published article.

\end{document}